\def\MR#1{}
\Crefname{Lemma}{Lemma}{Lemmas}
\Crefname{Theorem}{Theorem}{Theorems}
\theoremstyle{plain}
\newtheorem{Theorem}{Theorem}[section]
\newtheorem{Lemma}[Theorem]{Lemma}
\newtheorem{Corollary}[Theorem]{Corollary}
\newtheorem{Proposition}[Theorem]{Proposition}
\theoremstyle{definition}
\newtheorem{Assumptions and Discussion}[Theorem]{Assumptions and Discussion}
\newtheorem{Definition}[Theorem]{Definition}
\newtheorem{Remark}[Theorem]{Remark}
\theoremstyle{remark}
\newtheorem{Setting}[Theorem]{Setting}
\newtheorem*{acknowledgment*}{Acknowledgment}
\def\lex{\operatorname{lex}}
\def\bar#1{\overline{#1}}
\def\depth{\operatorname{depth}}
\def\dim{\operatorname{dim}}
\def\e{\operatorname{e}}
\def\Hilb{\operatorname{Hilb}}
\def\ini{\operatorname{in}} 
\def\KK{{\mathbb K}}
\def\lex{{\operatorname{lex}}}
\def\reg{\operatorname{reg}}
\def\cd{\operatorname{cd}}
\def\depth{\operatorname{depth}}
\def\ZZ{{\mathbb Z}}
\newcommand\bdX{\bm{X}}
\newcommand\calC{\mathcal{C}}
\newcommand\calJ{\mathcal{J}}
\newcommand\fraka{\mathfrak{a}}
\newcommand\frakb{\mathfrak{b}}
\newcommand\frakm{\mathfrak{m}}
\begin{document}
\title{Generalized binomial edge ideals of complete $r$-partite graphs}

\author{Yi-Huang Shen}
\address{CAS Wu Wen-Tsun Key Laboratory of Mathematics, School of Mathematical Sciences, University of Science and Technology of China, Hefei, Anhui, 230026, P.R.~China}
\email{yhshen@ustc.edu.cn}

\author{Guangjun Zhu$^{\ast}$}
\address{School of Mathematical Sciences, Soochow University, Suzhou, Jiangsu, 215006, P.R.~China}
\email{zhuguangjun@suda.edu.cn}

\thanks{$^{\ast}$ Corresponding author}
\thanks{2020 {\em Mathematics Subject Classification}.
Primary 13C15, 13C70; Secondary 05E40, 13F65}

\thanks{Keywords: Regularity, depth,  cohomological dimension, generalized binomial edge ideal, complete $r$-partite graph}

\begin{abstract}
    This paper analyzes the cohomological dimension of the generalized binomial edge ideal $\calJ_{K_m,G}$ for a complete $r$-partite graph $G$. Additionally, the Krull dimension, the depth, the Castelnuovo--Mumford regularity, the Hilbert series, and the multiplicity of its quotient ring are explicitly determined. 
\end{abstract}  

\maketitle

\section{Introduction}
When $n$ is a positive integer, $[n]$ represents the set $\{1,2,\dots,n-1,n\}$ according to convention. Suppose that $m$ and $n$ are positive integers.
Let $S=\KK[\bdX]\coloneqq \KK[x_{ij}:(i,j)\in[m]\times [n]]$ be the polynomial ring in $mn$ variables over a field $\KK$.
In their paper \cite{MR3290687}, Ene et al.~introduced the {binomial edge ideal of a pair of graphs}. Specifically, let $G_1$ and $G_2$ be two simple graphs on the vertex sets $[m]$ and $[n]$ with the  edge sets $E(G_1)$ and $E(G_2)$, respectively. Given $e=\{i, j\}\in E(G_1)$ and $f=\{k, l\}\in E(G_2)$, where $i<j$ and $k<l$, we can assign a $2$-minor $p_{(e,f)}=[i,j\,|\,k,l]\coloneqq x_{ik}x_{jl}-x_{il}x_{jk}$ to the pair $(e, f)$. The \emph{binomial edge ideal of the pair $(G_1, G_2)$} is defined as the ideal
\[
    \calJ_{G_1,G_2}\coloneqq (p_{(e,f)} : e \in E(G_1), f \in E(G_2))
\]
in $S$.

Let $G$ be a finite simple graph with the vertex set $[n]$. The \emph{generalized binomial edge ideal} of $G$ is the binomial edge ideal of the pair $(K_m, G)$, where $K_m$ is a complete graph with $m$ vertices.
Generalized binomial edge ideals have numerous applications in algebraic statistics for studying conditional independence, as noted in \cite{MR3011436}. These ideals are a generalization of the classical binomial edge ideals studied in \cites{MR2669070, MR2782571}. It is not surprising that certain properties of the classical binomial edge ideal $J_G$ can be naturally extended to $\mathcal{J}_{K_m,G}$. For instance, $\mathcal{J}_{K_m,G}$ is a radical ideal, and the subsets of $G$ with the cut-point property determine the minimal primes of $\calJ_{K_m,G}$, as proven in \cite{MR3011436}.

However, for $m\ge 3$, the generalized binomial edge ideal $\calJ_{K_m,G}$ cannot be generated by maximal minors of the matrix $\bdX$. This gives rise to new phenomena arise. For example, Chaudhry and Irfan in \cite{MR4233116}  proved  that, for a block graph $G$, the ideal $\calJ_{K_m,G}$ is Cohen--Macaulay if and only if $\calJ_{K_m,G}$ is unmixed, if and only if $G$ is a complete graph.  Kumar \cite{MR4033090} demonstrated 
that the upper bound of the regularity $\reg(S/\calJ_{K_m,G})$ is $n-1$, which is achieved if $m\ge n$.

Building on their work, we explored the algebraic properties of generalized binomial edge ideals for specific graphs. In \cite{arXiv:2305.05365}, we provided precise formulas for the Krull dimension, depth and regularity of $S/\calJ_{K_m,G}$, where $G$ is either a bipartite graph $F_p$ or a $k$-fan graph $F_k^{W}(K_n)$. These graphs serve as the foundation for constructing Cohen--Macaulay binomial edge ideals of bipartite graphs. In a recent study \cite{arXiv:2310.20235}, we analyzed the regularity and depth of the  powers of  the generalized binomial edge ideal $\calJ_{K_m,P_n}$ of a path graph $P_n$ using the Sagbi basis theory.
Furthermore, we showed that the symbolic powers and ordinary powers of $\calJ_{K_m,P_n}$ are identical. 

This paper focuses on the generalized binomial edge ideals of complete $r$-partite graphs.
In their work \cite{MR3195706}, Schenzel and Zafar  computed the Krull dimension, depth, regularity, Hilbert function and multiplicity of the binomial edge ideal of a complete bipartite graph. Wang and Tang also studied the regularity and  depth of the powers of the binomial edge ideal of a complete bipartite graph in their papers \cites{MR4563443, MR4632858}.  Meanwhile, Ohtani \cite{MR3169495} demonstrated that the symbolic powers and ordinary powers of $J_{G}$ coincide when $G$ is a complete $r$-partite graph. 
However, there is limited understanding of the generalized binomial edge ideal of a complete $r$-partite graph.

The cohomological dimension of binomial edge ideals is also an intriguing topic. In their work \cite{MR1082012}, Bruns and Schw\"anzl  computed the cohomological dimension of $\calJ_{K_m,K_n}$. Katsabekis \cite{arXiv:2207.02256} provided a lower bound on the cohomological dimension of $\calJ_{K_m,G}$ for any non-complete graph $G$, based on its vertex connectivity. Particularly, he obtained interesting results for path graphs and star graphs. What can be said about the cohomological dimension of the generalized binomial edge ideal of complete $r$-partite graphs?

The article is organized as follows. Section \ref{sec:prelim} presents essential definitions and terminology that will be necessary later.
In Section \ref{sec:partite},
we begin by analyzing the primary decomposition of $\calJ_{K_m,G}$, where $G$ is  a complete $r$-partite graph. This enables us to quickly obtain the Krull dimension and the depth of $S/\calJ_{K_m,G}$. As a corollary, we generalize a recent result of Katsabekis on the cohomological dimension.
We then compute the Castelnuovo--Mumford regularity, Hilbert series, and  multiplicity of $S/\calJ_{K_m,G}$. It is important to note that Schenzel and Zafar computed the regularity of binomial edge ideals of complete bipartite graphs as a by-product of a long and meticulous analysis of the modules of deficiencies. In contrast, we present here an exact formula for the regularity of the generalized binomial edge ideal of a complete $r$-partite graph, using the comparison-by-induced-subgraph strategy, which significantly shortens the proof.
Finally, we conclude this paper by demonstrating that the binomial edge ideal of a complete $r$-partite graph is of K\"onig type, following the definition of Herzog et al.~in \cite{MR4358668}. This generalizes a recent result of Williams in \cite{arXiv:2310.14410} for complete bipartite graphs.

\section{Preliminaries}
\label{sec:prelim}

This section provides the necessary definitions and basic facts for the paper. For further details, please refer to \cites{MR1613627, MR1251956, MR3838370}.

\subsection{Notions of simple graphs}
To begin, let us review some fundamental concepts regarding graphs. Let $G$ be a simple graph with the vertex set $V(G)$ and the edge set $E(G)$. For any subset $A$ of $V(G)$, the \emph{induced subgraph} of $G$ on the vertex set $A$, denoted by $G|_{A}$, satisfies that $V(G|_A)=A$ and for any $i,j \in A$, $\{i,j\} \in E(G|_{A})$ if and only if $\{i,j\}\in E(G)$. At the same time, the induced subgraph of $G$ on the set $V(G)\setminus A$ will be denoted by $G\setminus A$. In particular, if $A=\{v\}$  then we will write $G\setminus v$ instead of $G\setminus \{v\}$ for simplicity.

Given positive integers $n_1$ and $n_2$, where $n_1 \le  n_2$, the set $[n_1,n_2]$ is defined as $\{n_1,n_1+1,\dots,n_2-1,n_2\}$. If $n_1=1$, we will use the simplified notation $[n_2]$.

Let $r$ be an integer greater than or equal to $2$. A simple graph $G$ is \emph{$r$-partite} if its vertex set $V(G)$ can be partitioned into $r$ pairwise disjoint subsets, such that no two vertices in the same subset are adjacent in $G$.  A \emph{complete $r$-partite} graph with $\sum_{i=1}^{r}n_i$  vertices, denoted by $K_{n_1,n_2,\ldots,n_r}$, is a graph with the vertex set $V(K_{n_1,n_2,\ldots,n_r})=[n_1]\sqcup [n_1+1,n_1+n_2]\sqcup\cdots \sqcup  [1+\sum_{i=1}^{r-1}n_i,\sum_{i=1}^{r}n_i]$ and the edge set $E(K_{n_1,n_2,\ldots,n_r})=\{\{u, v\}: u\in V_i, v\in V_j  \text{\ for different\ } i,j \}$.  A graph with $r=2$ is called a \emph{complete bipartite} graph. In contrast, a graph with $n_1=n_2=\cdots=n_r=1$ is known as a \emph{complete graph}, conventionally denoted by $K_r$.

A \emph{walk} $W$ of length $n-1$ in a graph $G$ is a sequence of vertices $w_1$ through $w_{n}$, where each consecutive pair of vertices $\{w_i,w_{i+1}\}$ is connected by an edge in $G$.
A \emph{path} is a walk where all vertices are distinct. To simplify notation, a path of length $n-1$ is denoted by $P_n$.

\subsection{Primary decompositions}

Rauh \cite{MR3011436} has already studied the primary decomposition of generalized binomial edge ideals. 
Specifically, let $m,n\ge 2$ be two positive integers and $G$ be a simple graph with the vertex set $[n]$. For each $T\subseteq [n]$, let $\bar{T}=V(G)\setminus T$, and $G_1,\ldots, G_{c(T)}$ be the connected components of $G|_{\bar{T}}$. In addition, let $\widetilde{G_i}$ be the complete graph on the set $V(G_i)$ for $i=1,\dots,c(T)$. Then, we have the ideal 
\[
    P_T(K_m, G)\coloneqq (x_{ij}: (i,j)\in [m]\times T)+\calJ_{K_m,\widetilde{G}_1}+\cdots+\calJ_{K_m,\widetilde{G}_{c(T)}}
\]
in the polynomial ring $S=\KK[x_{ij}: (i,j)\in[m]\times [n]]$. Note that $P_T(K_m, G)$ is a prime ideal containing $\calJ_{K_m,G}$.

To describe the minimal primes of $\calJ_{K_m,G}$,
we need to impose restrictions on $T$. We define a
\emph{cut point} of $G$ as a vertex $v\in V(G)$
such that $c(G)<c(G\setminus v)$, where $c(G)$
denotes the number of connected components of $G$.
Let $T$ be a subset of $V(G)$, and let $c(T)$
denote the number of connected components of
$G\setminus T$. If $v$ is a cut point of the
induced subgraph $G\setminus (T\setminus \{v\})$
for any $v\in T$, then we say that $T$ has the
\emph{cut point property}.  We define $\calC(G)$ as
the set of all subsets of $V(G)$ that have the cut
point property,
and we let $\overline{\calC}(G)\coloneqq
\calC(G)\setminus \{\emptyset\}$.

Now, we can describe the minimal prime ideals of $\calJ_{K_m,G}$.
\begin{Lemma}
    [{\cite[Theorem 7]{MR3011436}}]
    \label{lem:decompo}
    Let $H$ be a finite simple graph. Then, the generalized binomial edge ideal $\calJ_{K_m,H}$ is reduced and
    \[
        \calJ_{K_m,H}
        =\bigcap_{T\in  \mathcal{C}(H)}P_T(K_m, H)
    \]
    is a minimal primary decomposition of this ideal.
\end{Lemma}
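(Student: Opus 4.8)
The statement bundles three facts together: that $\calJ_{K_m,H}$ is a radical ideal, that its minimal primes are exactly the ideals $P_T(K_m,H)$ with $T\in\calC(H)$, and that the displayed intersection is irredundant. Since each $P_T(K_m,H)$ is prime, the first two are equivalent to the single equality $\calJ_{K_m,H}=\bigcap_{T\subseteq V(H)}P_T(K_m,H)$ together with a combinatorial reduction of the index set to $\calC(H)$, while the third amounts to pairwise incomparability of the $P_T$ with $T\in\calC(H)$. So the plan is to handle these three pieces in turn.

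\emph{Elementary inclusions.} Fix $T\subseteq V(H)$ and let $H_1,\dots,H_{c(T)}$ be the connected components of $H|_{\bar T}$. Each $\calJ_{K_m,\widetilde H_s}$ is by definition the ideal of $2$-minors of the generic $m\times |V(H_s)|$ matrix $(x_{iu})_{i\in[m],\,u\in V(H_s)}$, which is prime over any field; these ideals and the monomial prime $(x_{ij}:(i,j)\in[m]\times T)$ are built from pairwise disjoint sets of variables and each has a geometrically integral quotient, so their sum $P_T(K_m,H)$ is prime. The inclusion $\calJ_{K_m,H}\subseteq P_T(K_m,H)$ holds for \emph{every} $T\subseteq V(H)$: a generator $p_{(e,f)}=[i,j\,|\,k,l]$ with $f=\{k,l\}\in E(H)$ lies in $(x_{ij}:(i,j)\in[m]\times T)$ if $\{k,l\}\cap T\neq\emptyset$, and otherwise $\{k,l\}$ is an edge of $H|_{\bar T}$, so $k$ and $l$ lie in a common component $H_s$ and $p_{(e,f)}\in\calJ_{K_m,\widetilde H_s}$.

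\emph{Combinatorial reduction and incomparability.} If $T\notin\calC(H)$, pick $v\in T$ that is not a cut point of $H\setminus(T\setminus\{v\})$ and set $T'=T\setminus\{v\}$; then $v$ is isolated in $H|_{\bar{T'}}$, or lies in a component that stays connected after $v$ is deleted, and in either case expanding the $2$-minors of the corresponding complete graph along the column indexed by $v$ gives $\calJ_{K_m,\widetilde{H_s}}\subseteq (x_{iv}:i\in[m])+\calJ_{K_m,\widetilde{H_s\setminus v}}$, hence $P_{T'}(K_m,H)\subseteq P_T(K_m,H)$. Iterating shows that every $P_T$ with $T\notin\calC(H)$ contains some $P_{T_0}$ with $T_0\in\calC(H)$, so $\bigcap_{T\subseteq V(H)}P_T(K_m,H)=\bigcap_{T\in\calC(H)}P_T(K_m,H)$. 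For incomparability, note that the linear forms in $P_T(K_m,H)$ span exactly $\langle x_{ij}:i\in[m],\,j\in T\rangle$ (the determinantal summands contribute no linear form), so $P_T\subseteq P_{T'}$ forces $T\subseteq T'$; and a comparison of the quadratic generators $[i,j\,|\,k,l]$ supported on the larger components of $H|_{\bar T}$ — which cannot lie in $P_{T'}$ when $T\subsetneq T'$, since modulo $P_{T'}$ the quotient is a tensor product of determinantal rings in which distinct variables remain non-proportional — excludes proper containment. Hence the $P_T$ with $T\in\calC(H)$ are pairwise incomparable, and the decomposition is irredundant.

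\emph{The core inclusion, and the main obstacle.} It remains to prove $\bigcap_{T\subseteq V(H)}P_T(K_m,H)\subseteq\calJ_{K_m,H}$, which simultaneously yields radicalness and pins down the minimal primes. I would induct on $|V(H)|$; the cases $|V(H)|\le 2$ are immediate, and the complete-graph case is the classical fact that $\calJ_{K_m,K_n}$ is prime, being the ideal of $2$-minors of a generic matrix (here $\calC(K_n)=\{\emptyset\}$). When $H=H'\sqcup H''$ is disconnected, the generators, the ideals $P_T$, and the family $\calC$ all split along the two parts over disjoint variables, and the result for $H$ follows from the results for $H'$ and $H''$. For connected, non-complete $H$, fix a vertex $v$ and run the deletion/colon induction of the classical binomial edge ideal theory, adapted to the generic-matrix setting: relate $\calJ_{K_m,H}$ to $\calJ_{K_m,H}+(x_{iv}:i\in[m])=(x_{iv}:i\in[m])+\calJ_{K_m,H\setminus v}$ and to a suitable colon ideal, which is again a generalized binomial edge ideal of a graph obtained from $H$ by adding edges within the neighbourhood of $v$, and then invoke the inductive hypothesis. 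The delicate point — the main obstacle — is precisely this last step when $m\ge 3$: since $\calJ_{K_m,H}$ is not generated by maximal minors, the linear-algebra shortcuts available in the $m=2$ case break down, and one must instead use the structure of the $2$-minors of a generic matrix (their primeness, Cohen--Macaulayness, and the squarefreeness of a suitable initial ideal) to ensure that no superfluous or embedded associated prime arises, that is, that the minimal primes are exactly the $P_T$ with $T\in\calC(H)$.
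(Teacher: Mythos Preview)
The paper does not prove this lemma at all: it is quoted verbatim from Rauh \cite[Theorem 7]{MR3011436} and used as a black box. So there is no ``paper's own proof'' to compare against beyond the citation itself.

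That said, your sketch deserves comment on its own merits. The first two blocks are fine. Primeness of $P_T(K_m,H)$, the containment $\calJ_{K_m,H}\subseteq P_T(K_m,H)$, and the reduction of the index set from all $T\subseteq V(H)$ to $\calC(H)$ are handled correctly. The incomparability argument is a bit compressed but can be made precise: if $T\subsetneq T'$ with $T'\in\calC(H)$, pick $v\in T'\setminus T$; the cut-point property of $T'$ gives neighbours $u,w$ of $v$ lying in distinct components of $H|_{\bar{T'}}$, while $u,v,w$ lie in a common component of $H|_{\bar T}$, so $[i,j\,|\,u,w]\in P_T\setminus P_{T'}$.

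The genuine gap is in your third block. You correctly isolate the hard direction $\bigcap_T P_T(K_m,H)\subseteq\calJ_{K_m,H}$, but you do not prove it: you describe an inductive scheme (delete a vertex, pass to a colon ideal) without specifying what the colon ideal is for $m\ge 3$, and then you explicitly flag the obstacle rather than overcome it. For $m=2$ the colon $J_H:(x_v y_v)$ is again a binomial edge ideal and the induction closes; for $m\ge 3$ there is no single monomial to colon by, and it is not at all clear that any natural colon of $\calJ_{K_m,H}$ is again of the form $\calJ_{K_m,H'}$. As written, this step is a hope, not an argument.

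Rauh's actual route is different and avoids this difficulty entirely: he first shows (his Theorem~2, which this paper also invokes later) that $\calJ_{K_m,H}$ has a squarefree initial ideal with respect to a suitable term order, whence $\calJ_{K_m,H}$ is radical. Once radicalness is known, the equality $\calJ_{K_m,H}=\bigcap_{T\in\calC(H)}P_T(K_m,H)$ reduces to identifying the minimal primes, which is a purely combinatorial task that your first two blocks essentially carry out. If you want a self-contained proof, the cleanest fix is to supply the Gr\"obner-basis/squarefree-initial-ideal argument for radicalness in place of the unspecified colon induction.
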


\subsection{Depth and regularity}

In the sequel, let $\frakm$ be the unique graded maximal ideal of $S$ with respect to the standard grading. 
The local cohomology modules of a finitely generated graded $S$-module $M$ with respect to $\frakm$ are denoted by $H_{\frakm}^i(M)$ for $i\in \ZZ$.

\begin{Definition}
    Let $M$ be a finitely generated graded  $S$-module.
    \begin{enumerate}[a]
        \item For each $i$ from $0$ to $\dim(M)$, the \emph{$a_i$-invariant} of $M$ is defined as 
            \[
                a_i(M) \coloneqq \max\{t : (H_{\frakm}^i(M))_t \ne 0\},
            \]
            with the convention that $\max \emptyset = -\infty$.
        \item The \emph{Castelnuovo--Mumford regularity} of $M$ is defined as 
            \[
                \reg(M) \coloneqq \max\{a_i(M) + i:0\le i\le \dim(M)\}.
            \]
    \end{enumerate}  
\end{Definition}

\begin{Remark}
    \label{lem:basic-facts}
    For a finitely generated graded $S$-module $M$,
    with $M\ne 0$, the local cohomology module
    $H_{\frakm}^i(M)=0$ when $i<\depth(M)$ or
    $i>\dim(M)$. However,
    $H_{\frakm}^{\depth(M)}(M)\ne 0$ and
    $H_{\frakm}^{\dim(M)}(M)\ne 0$
    \textup{(}\cite[Theorem
    3.5.7]{MR1251956}\textup{)}. In particular, if
    $M$ is Cohen--Macaulay, then $\reg(M)=a_d(M)+d$
    for $d=\dim(M)$.
\end{Remark}

The lemma below is frequently utilized to calculate a module's depth and regularity.

\begin{Lemma}
    [{\cite[Lemmas 2.1, 3.1]{MR2643966}}]
    \label{depthlemma}
    Let $0\rightarrow M \rightarrow N \rightarrow P \rightarrow 0$ be a short exact sequence of finitely generated graded $S$-modules.  
    \begin{enumerate}[a]
        \item \label{depthlemma-a} One has $\depth(M)\ge \min \{\depth(N), \depth(P)+1\}$. The equality holds if $\depth(N)\neq\depth(P)$.
        \item \label{depthlemma-c} One has $\reg(M)\le\max\{\reg(N),\reg(P)+1\}$. The equality holds if $\reg(N)\neq\reg(P)$.
    \end{enumerate}
\end{Lemma}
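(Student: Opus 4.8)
The plan is to deduce both assertions from the long exact sequence of local cohomology modules attached to the short exact sequence, together with the cohomological descriptions of depth and regularity recorded above. First I would apply the functors $H_{\frakm}^{i}(-)$ to $0\to M\to N\to P\to 0$ to get the long exact sequence
\[
    \cdots\to H_{\frakm}^{i-1}(P)\to H_{\frakm}^{i}(M)\to H_{\frakm}^{i}(N)\to H_{\frakm}^{i}(P)\to H_{\frakm}^{i+1}(M)\to\cdots
\]
of graded $S$-modules, in which every homomorphism is homogeneous of degree $0$. By \Cref{lem:basic-facts}, $\depth(M)$ is the least index $i$ with $H_{\frakm}^{i}(M)\ne 0$; and by definition $\reg(M)=\max_i\{a_i(M)+i\}$ with $a_i(M)=\max\{t:(H_{\frakm}^{i}(M))_t\ne 0\}$. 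So both claims become a matter of tracking, degree by degree, where this sequence vanishes and where it does not.

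For the depth statement I would first observe that if $i<\min\{\depth(N),\depth(P)+1\}$, then $H_{\frakm}^{i}(N)=0$ and $H_{\frakm}^{i-1}(P)=0$, so exactness at $H_{\frakm}^{i}(M)$ forces $H_{\frakm}^{i}(M)=0$; hence $\depth(M)\ge\min\{\depth(N),\depth(P)+1\}$. For the equality under the hypothesis $\depth(N)\ne\depth(P)$, put $d=\min\{\depth(N),\depth(P)+1\}$ and split into the cases $\depth(N)<\depth(P)$ and $\depth(N)>\depth(P)$. In the former, $d=\depth(N)$, the groups $H_{\frakm}^{d-1}(P)$ and $H_{\frakm}^{d}(P)$ both vanish, and the exact piece $0\to H_{\frakm}^{d}(M)\to H_{\frakm}^{d}(N)\to 0$ gives $H_{\frakm}^{d}(M)\cong H_{\frakm}^{d}(N)\ne 0$. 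In the latter, $d=\depth(P)+1$, $H_{\frakm}^{d-1}(N)=0$, and the exact piece $0\to H_{\frakm}^{d-1}(P)\to H_{\frakm}^{d}(M)$ embeds the nonzero module $H_{\frakm}^{\depth(P)}(P)$ into $H_{\frakm}^{d}(M)$. In either case $H_{\frakm}^{d}(M)\ne 0$ while $H_{\frakm}^{i}(M)=0$ for $i<d$, so $\depth(M)=d$.

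For the regularity statement, exactness shows that $(H_{\frakm}^{i}(M))_t\ne 0$ implies $(H_{\frakm}^{i-1}(P))_t\ne 0$ or $(H_{\frakm}^{i}(N))_t\ne 0$, whence $t+i\le a_{i-1}(P)+(i-1)+1\le\reg(P)+1$ or $t+i\le a_i(N)+i\le\reg(N)$; taking the maximum over $i$ gives $\reg(M)\le\max\{\reg(N),\reg(P)+1\}$. For the equality under $\reg(N)\ne\reg(P)$: if $\reg(N)>\reg(P)$, choose $i$ with $a_i(N)+i=\reg(N)$ and set $t=a_i(N)$; then $(H_{\frakm}^{i}(P))_t=0$ (otherwise $\reg(P)\ge t+i=\reg(N)$), so $H_{\frakm}^{i}(M)\to H_{\frakm}^{i}(N)$ is onto in degree $t$, forcing $(H_{\frakm}^{i}(M))_t\ne 0$ and $\reg(M)\ge\reg(N)=\max\{\reg(N),\reg(P)+1\}$. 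If instead $\reg(N)<\reg(P)$, choose $j$ with $a_j(P)+j=\reg(P)$ and set $t=a_j(P)$; then $(H_{\frakm}^{j}(N))_t=0$, so $H_{\frakm}^{j}(P)\to H_{\frakm}^{j+1}(M)$ is injective in degree $t$, forcing $(H_{\frakm}^{j+1}(M))_t\ne 0$ and $\reg(M)\ge\reg(P)+1=\max\{\reg(N),\reg(P)+1\}$. Either way the upper bound already established closes the argument.

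I do not anticipate a genuine obstacle: the whole proof is bookkeeping with this long exact sequence and \Cref{lem:basic-facts}. The only points demanding a little care are the two equality assertions, where one must split according to which of $\depth(N),\depth(P)$ (respectively $\reg(N),\reg(P)$) is larger and then verify that the appropriate neighbouring local cohomology module --- $H_{\frakm}^{d-1}(N)$ in the depth case, and $H_{\frakm}^{i}(P)$ or $H_{\frakm}^{j}(N)$ in the regularity case --- really vanishes in the critical degree; each of these verifications uses precisely the strict inequality in the hypothesis.
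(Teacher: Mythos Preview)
Your argument is correct and is the standard proof via the long exact sequence in local cohomology. Note, however, that the paper does not actually prove this lemma: it is stated with a citation to \cite[Lemmas 2.1, 3.1]{MR2643966} and no proof is given. So there is nothing in the paper to compare against, and your write-up simply supplies the omitted details.
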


The following two lemmas are highly valuable for analyzing the regularity of generalized binomial ideals.

\begin{Lemma}
    [{\cite[Proposition 8]{MR3040610}}]
    \label{lem:induced-graph}
    For a simple graph $G$ and its induced subgraph $H$, $\reg(\mathcal{J}_{K_m,H})$ is less than or equal to $\reg(\mathcal{J}_{K_m,G})$.
\end{Lemma}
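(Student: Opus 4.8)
The plan is to reduce to the deletion of a single vertex and then to degenerate $\mathcal{J}_{K_m,G}$ to a squarefree monomial ideal, where the inequality becomes elementary. Since ``induced subgraph'' is a transitive relation and any induced subgraph $H$ of $G$ is reached from $G$ by deleting the vertices of $V(G)\setminus V(H)$ one at a time, it suffices to treat $H=G\setminus v$ for a single vertex $v$. After relabelling we may assume $v$ is the last vertex, so that $S=S'[x_{1v},\dots,x_{mv}]$ with $S'=\KK[x_{ij}:j\neq v]$ and $\mathcal{J}_{K_m,G\setminus v}\subseteq S'$; since $S'\hookrightarrow S$ is flat, regularity is unaffected by passing to the extended ideal $\mathcal{J}_{K_m,G\setminus v}S$, so I would work throughout in $S$, aiming for $\reg\bigl(S/(\mathcal{J}_{K_m,G\setminus v}S)\bigr)\le\reg\bigl(S/\mathcal{J}_{K_m,G}\bigr)$ (the shift by one between an ideal and its quotient ring being harmless).

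The heart of the plan is a Gröbner degeneration. I would fix a term order $<$ on $S$ that is diagonal on the $2$-minors and for which $\mathcal{J}_{K_m,G}$ admits a combinatorially indexed Gröbner basis with squarefree leading monomials — a version for general $m$ of the admissible-path Gröbner basis known when $m=2$ — arranged so that precisely those basis elements not involving any of $x_{1v},\dots,x_{mv}$ form a Gröbner basis of $\mathcal{J}_{K_m,G\setminus v}$. Granting this, $\ini_<(\mathcal{J}_{K_m,G})$ is a squarefree monomial ideal $I_\Delta$ for a simplicial complex $\Delta$ on the variable set, and $\ini_<(\mathcal{J}_{K_m,G\setminus v}S)$ is obtained from it by setting $x_{1v}=\dots=x_{mv}=0$; equivalently it is $I_{\Delta'}$, where $\Delta'$ is the full subcomplex of $\Delta$ on the variables $x_{ij}$ with $j\neq v$.

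Two standard inputs then conclude. By the theorem of Conca and Varbaro on squarefree Gröbner degenerations (here the squarefreeness of the initial ideal is used), $\reg\bigl(S/\mathcal{J}_{K_m,G}\bigr)=\reg\bigl(S/I_\Delta\bigr)$. By Hochster's formula the graded Betti numbers of a Stanley--Reisner ring are determined by the reduced homology of the induced subcomplexes; since every face set $\sigma$ avoiding the columns $x_{\bullet v}$ satisfies $\Delta'|_\sigma=\Delta|_\sigma$, one gets $\beta_{i,j}\bigl(S/I_{\Delta'}\bigr)\le\beta_{i,j}\bigl(S/I_\Delta\bigr)$ for all $i,j$, hence $\reg\bigl(S/I_{\Delta'}\bigr)\le\reg\bigl(S/I_\Delta\bigr)$. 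Combined with the always-valid inequality $\reg\bigl(S/(\mathcal{J}_{K_m,G\setminus v}S)\bigr)\le\reg\bigl(S/I_{\Delta'}\bigr)$, this gives $\reg\bigl(S/(\mathcal{J}_{K_m,G\setminus v}S)\bigr)\le\reg\bigl(S/\mathcal{J}_{K_m,G}\bigr)$, as wanted.

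The step I expect to be the real obstacle is the Gröbner-theoretic one: it demands an explicit enough Gröbner basis of $\mathcal{J}_{K_m,G}$ for an arbitrary graph $G$ and arbitrary $m$ — not only for $G=K_n$ or $m=2$ — so as to see at once that the initial ideal is squarefree and that it restricts correctly under vertex deletion. If a convenient such description is unavailable, an alternative is an Ohtani-type decomposition $\mathcal{J}_{K_m,G}=(\mathcal{J}_{K_m,G\setminus v}+Q_v)\cap\mathcal{J}_{K_m,G_v}$, with $Q_v=(x_{1v},\dots,x_{mv})$ and $G_v$ the graph obtained from $G$ by making $N_G(v)$ a clique; the resulting Mayer--Vietoris sequence and the regularity statement of Lemma~\ref{depthlemma} give $\reg\bigl(S/\mathcal{J}_{K_m,G\setminus v}\bigr)\le\max\{\reg\bigl(S/\mathcal{J}_{K_m,G}\bigr),\reg\bigl(S/\mathcal{J}_{K_m,G_v\setminus v}\bigr)\}$, after which one would have to dispose of the cliquified term $G_v\setminus v$ by a further induction — which is where the substantive work would then lie.
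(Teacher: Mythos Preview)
The paper does not prove this lemma at all; it is quoted from \cite[Proposition~8]{MR3040610} and used as a black box. There is therefore no in-paper argument against which to compare your attempt.

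Your Gr\"obner-degeneration plan is sound, and the step you single out as ``the real obstacle'' is in fact already available in the literature this very paper relies on. The reference \cite[Theorem~2]{MR3011436}, invoked later in the paper for the squarefreeness of the initial ideal, gives the reduced Gr\"obner basis of $\mathcal{J}_{K_m,G}$ explicitly for a diagonal lexicographic order: its elements are indexed by admissible paths in $G$ and have squarefree leading monomials supported on the columns visited by the path. Since the admissible paths in $G\setminus v$ are precisely those of $G$ avoiding $v$, the basis restricts exactly as you require, and $\ini_<(\mathcal{J}_{K_m,G\setminus v})$ is indeed the Stanley--Reisner ideal of the induced subcomplex $\Delta'$. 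With that input your chain --- Conca--Varbaro for $\reg(S/\mathcal{J}_{K_m,G})=\reg(S/I_\Delta)$, Hochster's formula for $\reg(S/I_{\Delta'})\le\reg(S/I_\Delta)$, and the standard inequality $\reg(S/\mathcal{J}_{K_m,H})\le\reg(S/\ini_<\mathcal{J}_{K_m,H})$ --- is complete and correct.

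One chronological remark: the Conca--Varbaro theorem postdates \cite{MR3040610}, so the argument there is necessarily different; your route is thus a valid modern proof rather than a reconstruction of the original. Your Ohtani-type alternative would also succeed, but once the Gr\"obner input from \cite{MR3011436} is recognised it is more work than needed.
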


\begin{Lemma}
    [{\cite[Theorems 3.6 and 3.7]{MR4033090}}]
    \label{lem:reg-min-equal}
    For a connected graph $G$ on the vertex set $[n]$ and $m \ge 2$, $\reg(S/\calJ_{K_m,G})$ is less than or equal to $n-1$. If $m \ge n \ge 2$, then $\reg(S/\calJ_{K_m,G})$ equals $n-1$.
\end{Lemma}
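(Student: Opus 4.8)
The plan is to prove both statements together by a common double induction, the base case being the complete graph. When $G=K_n$ the ideal $\calJ_{K_m,K_n}=I_2(\bdX)$ is the ideal of $2\times 2$ minors of the generic $m\times n$ matrix $\bdX$: the ring $S/I_2(\bdX)$ is Cohen--Macaulay with $\reg(S/I_2(\bdX))=\min(m,n)-1$ (a classical determinantal fact, see Bruns--Vetter), which is $\le n-1$ always and equals $n-1$ exactly when $m\ge n$. The inductive engine is the generalized Ohtani decomposition: for a vertex $v$, set $L_v\coloneqq(x_{1v},\dots,x_{mv})$ and let $G_v$ be $G$ with $N_G(v)$ completed to a clique; then
\[
  \calJ_{K_m,G}=\calJ_{K_m,G_v}\cap\bigl(L_v+\calJ_{K_m,G\setminus v}\bigr),
  \qquad
  \calJ_{K_m,G_v}+L_v+\calJ_{K_m,G\setminus v}=L_v+\calJ_{K_m,G_v\setminus v},
\]
the second identity holding because the generators of $\calJ_{K_m,G_v}$ meeting column $v$ vanish modulo $L_v$ while $\calJ_{K_m,G\setminus v}\subseteq\calJ_{K_m,G_v\setminus v}$ ($G\setminus v$ being a subgraph of $G_v\setminus v$ on the same vertex set). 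This yields the Mayer--Vietoris short exact sequence
\[
  0\to S/\calJ_{K_m,G}\to\bigl(S/\calJ_{K_m,G_v}\bigr)\oplus\bigl(S/(L_v+\calJ_{K_m,G\setminus v})\bigr)\to S/(L_v+\calJ_{K_m,G_v\setminus v})\to 0 .
\]

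For the upper bound I induct on $n=|V(G)|$ on the outside and on the number of non-edges of $G$ on the inside. If $G\ne K_n$ then $G$ has a non-simplicial vertex $v$ (a connected graph all of whose vertices are simplicial is complete, by a shortest-path argument), so $G_v$ has strictly fewer non-edges and $\reg(S/\calJ_{K_m,G_v})\le n-1$ by the inner hypothesis. For $H\in\{G\setminus v,\ G_v\setminus v\}$ one has $S/(L_v+\calJ_{K_m,H})\cong\KK[x_{ij}\colon j\ne v]/\calJ_{K_m,H}$, and since $H$ lives on $n-1$ vertices, breaking it into connected components $H_1,\dots,H_c$ and using additivity of the regularity over disjoint-variable summands together with the outer hypothesis gives the bound $\sum_t(|V(H_t)|-1)=(n-1)-c\le n-2$. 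Feeding these into Lemma~\ref{depthlemma} gives $\reg(S/\calJ_{K_m,G})\le\max\{n-1,\,n-2,\,(n-2)+1\}=n-1$.

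For the lower bound when $m\ge n$, I induct on the number of non-edges using the same exact sequence and the standard estimate $\reg(N)\le\max\{\reg(M),\reg(P)\}$ for $0\to M\to N\to P\to 0$. The base case $G=K_n$ gives $n-1$ since $m\ge n$. For $G\ne K_n$, pick a non-simplicial $v$; the inner hypothesis gives $\reg(S/\calJ_{K_m,G_v})=n-1$, while both error terms $S/(L_v+\calJ_{K_m,G\setminus v})$ and $S/(L_v+\calJ_{K_m,G_v\setminus v})$ have regularity $\le n-2$ by the componentwise bound of the previous paragraph (which uses only the already-proven upper bound, valid for every $m$). Hence $n-1=\reg(S/\calJ_{K_m,G_v})\le\max\{\reg(S/\calJ_{K_m,G}),\,n-2\}$, so $\reg(S/\calJ_{K_m,G})\ge n-1$; with the upper bound, equality follows.

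I expect the main obstacle to be the decomposition step: carefully establishing the Ohtani identity for generalized binomial edge ideals and, above all, that the ideal sum collapses to $L_v+\calJ_{K_m,G_v\setminus v}$, which is precisely what keeps the recursion inside the class and on fewer vertices. The second essential ingredient is the classical value $\reg(S/I_2(\bdX))=\min(m,n)-1$ of the generic determinantal ring, which is where the number $n-1$ actually enters. Everything else is routine bookkeeping with Lemma~\ref{depthlemma}, the two standard short-exact-sequence regularity inequalities, and the simplicial-vertex observation that makes the inner induction terminate.
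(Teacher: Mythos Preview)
The paper does not give its own proof of this lemma; it is cited from Kumar \cite[Theorems~3.6 and~3.7]{MR4033090} and used as a black box. Your argument is correct and follows the route of Kumar's original proof: the generalized Ohtani decomposition $\calJ_{K_m,G}=\calJ_{K_m,G_v}\cap(L_v+\calJ_{K_m,G\setminus v})$ for a non-simplicial vertex $v$, the Mayer--Vietoris short exact sequence it produces, and the double induction on $|V(G)|$ and on the number of non-edges, anchored by the determinantal base case $\reg(S/I_2(\bdX))=\min(m,n)-1$.
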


\section{Generalized binomial edge ideals of a complete $r$-partite graph}
\label{sec:partite}

This section analyzes the generalized binomial edge ideal $\calJ_{K_m,G}$ associated with a complete $r$-partite graph $G$ where $r\ge 2$.  The primary decomposition of $\calJ_{K_m,G}$ is examined to determine the Krull dimension and the depth of $S/\calJ_{K_m,G}$. A recent result of Katsabekis on the cohomological dimension is generalized as a corollary. Additionally, the Castelnuovo--Mumford regularity, the Hilbert series, and the multiplicity of the quotient ring $S/\calJ_{K_m,G}$ are computed.

Assuming $G$ is a complete $r$-partite graph, its vertices can be rearranged to meet the following conditions: 

\begin{Setting}
    \label{r_setting} 
    Let $m, r, n_1, n_2,\dots,n_r$ be positive integers such that $m\ge 2$, $r\ge 2$, $n_r\ge 2$, and $1=n_1= n_2=\cdots=n_{s-1}<n_{s}\le \cdots\le n_r$ for some $s\in [r]$. Suppose that $G$ is a complete $r$-partite graph with  a partition $V_1\sqcup \cdots \sqcup V_r$, where $V_k \coloneqq \left[1+\sum\limits_{j=1}^{k-1}n_j,\sum\limits_{j=1}^{k}n_j\right]$ for each $k\in [r]$. At the same time, let $\widetilde{G}$ be the complete graph on $[n]$ with $n\coloneqq \sum_{j=1}^{r}n_j$.
    As usual, let $\calJ_{K_m,G}$ and $\calJ_{K_m,\widetilde{G}}$ be the generalized binomial edge ideals of the graphs $G$ and $\widetilde{G}$, respectively, in the polynomial ring $S=\KK[x_{ij}:i\in[m],j\in[n]]$. Furthermore, let
    \[
        A_i \coloneqq 
        \begin{cases}
            S, &\text{if $n_i=1$,} \\
            (x_{ij}: i\in [m], j\in T_i), & \text{if $n_i\ge 2$,}
        \end{cases}
    \]
    where $T_i\coloneqq \bigsqcup_{j\ne i}V_j$. 
\end{Setting}

\begin{Remark}
    If $n_1=\cdots=n_r=1$, then $G$ is a complete graph $K_r$. In this scenario, $S/\calJ_{K_m,G}$ is Cohen--Macaulay with $\dim(S/\calJ_{K_m,G})=m+r-1$ and $\reg(S/\calJ_{K_m,G})=\min\{m-1,r-1\}$, as proven by \Cref{lem:generic-Cohen--Macaulay,reg-m-n} below.
\end{Remark}

\begin{Lemma}
    [{\cite[Corollary 4]{MR266912}}]
    \label{lem:generic-Cohen--Macaulay}    
    Let $\bdX=(x_{ij})_{1\le i\le m,1\le j\le n}$ be a generic matrix over $\KK$. Then the ideal $I_t(\bdX)$ in $S$, generated by all $t\times t$ minors of $\bdX$, is perfect of height $(m-t+1)(n-t+1)$. In particular, $\depth(\KK[\bdX]/I_t(\bdX))=(m+n-1)(t-1)$.
\end{Lemma}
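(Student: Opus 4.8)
This is the classical theorem of Hochster and Eagon on generic determinantal ideals, and the plan is to split it into its two essentially independent ingredients: the computation of $\height I_t(\bdX)$, and the Cohen--Macaulayness of the quotient ring $R\coloneqq\KK[\bdX]/I_t(\bdX)$. Once both are available, everything else is formal: since $S=\KK[\bdX]$ is regular, hence Cohen--Macaulay, $R$ is Cohen--Macaulay precisely when $I_t(\bdX)$ is perfect, and then
\[
\depth R=\dim R=\dim S-\height I_t(\bdX)=mn-(m-t+1)(n-t+1),
\]
which for $t=2$ (the case needed in this paper) is exactly $m+n-1$; equivalently one reads the depth off from the Auslander--Buchsbaum formula, since perfection gives $\pd_S R=\grade I_t(\bdX)=\height I_t(\bdX)$.

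For the height I would argue geometrically. We may assume $2\le t\le\min(m,n)$, since for $t=1$ the quotient is $\KK$ and for $t>\min(m,n)$ it is $S$, both Cohen--Macaulay with the stated numerics. The affine variety $V(I_t(\bdX))\subseteq\AA^{mn}_{\KK}$ is the locus of $m\times n$ matrices of rank $\le t-1$; writing a generic such matrix as a product $B\cdot C$ with $B$ of size $m\times(t-1)$ and $C$ of size $(t-1)\times n$ gives a dominant morphism $\AA^{m(t-1)}_{\KK}\times\AA^{(t-1)n}_{\KK}\to V(I_t(\bdX))$ whose generic fibre is a $\operatorname{GL}_{t-1}$-orbit of dimension $(t-1)^2$. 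Hence $V(I_t(\bdX))$ is irreducible of dimension $(t-1)(m+n)-(t-1)^2=(t-1)(m+n-t+1)$, so
\[
\height I_t(\bdX)=mn-(t-1)(m+n-t+1)=(m-t+1)(n-t+1).
\]
Alternatively, the bound $\height I_t(\bdX)\le(m-t+1)(n-t+1)$ is the classical Macaulay/Eagon--Northcott estimate valid over any Noetherian ring, and the parametrization above supplies the matching lower bound.

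For the Cohen--Macaulayness of $R$ I would follow one of the two standard characteristic-free routes. The first exhibits $R$ as an algebra with straightening law: on the set $\Pi$ of all minors $[\,a_1\cdots a_k\mid b_1\cdots b_k\,]$ of $\bdX$ with its natural partial order, $I_t(\bdX)$ is generated by the poset ideal of minors of size $\ge t$, and $R$ is an ASL on the complementary subposet $\Pi_{<t}$ of minors of size at most $t-1$, which is a wonderful poset; the straightening law then degenerates $R$ flatly onto the Stanley--Reisner ring of the order complex of $\Pi_{<t}$, which is Cohen--Macaulay (by Hibi's theorem, or directly by the De Concini--Eisenbud--Procesi standard-monomial induction). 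The second route is Hochster and Eagon's original argument via principal radical systems: one inducts on $t$ and on $\min(m,n)$, using that inverting $x_{11}$ and clearing the first row and column identifies $\KK[\bdX][x_{11}^{-1}]/I_t(\bdX)$ with a Laurent polynomial extension of $\KK[\bdX']/I_{t-1}(\bdX')$ for a generic $(m-1)\times(n-1)$ matrix $\bdX'$, together with a generic hyperplane-section step to drop the dimension, and one verifies that the ideals obtained by adjoining to the various $I_s(\bdX)$ the ideals generated by some of the $x_{ij}$ form a principal radical system, which propagates perfection through the induction.

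The hard part is the Cohen--Macaulayness of $R$: it is a genuinely deep, characteristic-independent fact, and any self-contained proof must set up a nontrivial amount of machinery — the straightening-law / Hodge-algebra formalism, or the principal-radical-system technique — whereas the height computation and the passage to the depth formula are routine bookkeeping once that input is in hand.
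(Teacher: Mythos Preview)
The paper does not prove this lemma at all: it is quoted verbatim from Hochster--Eagon (the reference \cite[Corollary 4]{MR266912}) and used as a black box, so there is no ``paper's own proof'' to compare against. Your proposal is a correct outline of the two standard characteristic-free proofs of the Hochster--Eagon theorem --- the geometric height computation via the rank-$(t-1)$ parametrization, together with either the ASL/standard-monomial route of De Concini--Eisenbud--Procesi or the original principal-radical-system induction --- and your reduction of the depth formula to Cohen--Macaulayness plus the height is exactly right.

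One incidental remark: your formula $\depth R = mn-(m-t+1)(n-t+1)=(t-1)(m+n-t+1)$ is the correct general expression, whereas the paper's displayed formula $(m+n-1)(t-1)$ agrees with it only for $t\le 2$; since the paper uses the lemma solely with $t=2$ (where both read $m+n-1$), this slip in the statement is harmless for the paper's applications, but your version is the one that actually holds for all $t$.
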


\begin{Lemma}
    [{\cite[Proposition 3.3]{MR4033090} or \cite[Theorem 5.1]{MR3764061}}]
    \label{reg-m-n}
    For the complete graph $K_n$ with $n$ vertices, we have $\reg(S/\calJ_{K_m,K_n})=\min\{m-1,n-1\}$.
\end{Lemma}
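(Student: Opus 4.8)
The plan is to reduce everything to the well-understood case of a generic determinantal ring. The first observation is that $\calJ_{K_m,K_n}$ is exactly $I_2(\bdX)$, the ideal generated by \emph{all} $2\times 2$ minors of the generic matrix $\bdX=(x_{ij})$: letting $e$ and $f$ range over the edge sets of $K_m$ and $K_n$ produces precisely the minors $[i,j\mid k,l]$ with $1\le i<j\le m$ and $1\le k<l\le n$. Two features of this presentation will be used. First, after relabelling the variable $x_{ij}$ as $x_{ji}$ (a graded automorphism of $S$) one has $I_2(\bdX)=I_2(\bdX^{\mathsf{T}})$, so $\reg(S/\calJ_{K_m,K_n})$ is symmetric in $m$ and $n$; hence it is enough to treat the case $m\ge n$. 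Second, by \Cref{lem:generic-Cohen--Macaulay} applied with $t=2$, the ideal $I_2(\bdX)$ is perfect of height $(m-1)(n-1)$, so $R\coloneqq S/\calJ_{K_m,K_n}$ is Cohen--Macaulay of Krull dimension $d\coloneqq mn-(m-1)(n-1)=m+n-1$.

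By \Cref{lem:basic-facts}, Cohen--Macaulayness gives $\reg(R)=a_d(R)+d$, so the whole problem collapses to computing the $a$-invariant of the top local cohomology of $R$; equivalently, if the Hilbert series is written as $\sum_{e\ge 0}(\dim_\KK R_e)\,t^e=h(t)/(1-t)^{d}$ with $h(1)\ne 0$, then $a_d(R)=\deg h(t)-d$, and it remains to show $\deg h(t)=\min(m,n)-1$. To this end I would invoke the classical identification of $R$ with the homogeneous coordinate ring of the Segre embedding $\PP^{m-1}\times\PP^{n-1}\hookrightarrow\PP^{mn-1}$, i.e.\ with the Segre product of the polynomial rings in $m$ and in $n$ variables, which gives $\dim_\KK R_e=\binom{m-1+e}{m-1}\binom{n-1+e}{n-1}$ for all $e\ge 0$. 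A Vandermonde-type generating-function identity then yields
\[
    \sum_{e\ge 0}\binom{m-1+e}{m-1}\binom{n-1+e}{n-1}t^e=\frac{\sum_{k\ge 0}\binom{m-1}{k}\binom{n-1}{k}t^k}{(1-t)^{m+n-1}},
\]
so $h(t)=\sum_{k=0}^{\min(m,n)-1}\binom{m-1}{k}\binom{n-1}{k}t^k$, which has degree $\min(m,n)-1$ and satisfies $h(1)=\binom{m+n-2}{n-1}\ne 0$. Consequently $a_d(R)=\min(m,n)-1-d$ and $\reg(R)=\min(m,n)-1=\min\{m-1,n-1\}$.

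A shorter but less self-contained alternative avoids the Hilbert-series computation altogether: combine the transpose symmetry above with \Cref{lem:reg-min-equal}. When $m\ge n\ge 2$ that lemma, applied to the connected graph $K_n$ on $n$ vertices, gives $\reg(R)=n-1=\min\{m-1,n-1\}$ immediately; the case $m<n$ then reduces to this one by transposing $\bdX$. (The trivial cases $m=1$ or $n=1$, where $\calJ_{K_m,K_n}=0$, give $\reg(R)=0$ as well.)

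I expect the main obstacle to be the exact determination of $\deg h(t)$ — equivalently, showing that the top local cohomology $H^{m+n-1}_{\frakm}(R)$ is nonzero precisely in the degrees $\le -\max(m,n)$. This is where the determinantal/Segre-product structure does the real work, ultimately through a Künneth-type decomposition of the cohomology of line bundles on $\PP^{m-1}\times\PP^{n-1}$ (à la Goto--Watanabe); everything else is routine bookkeeping. In the soft alternative the obstacle is merely displaced, since the substance is already contained in \Cref{lem:reg-min-equal}.
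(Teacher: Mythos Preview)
The paper does not actually prove this lemma; it is stated as a citation to external sources (Kumar, and separately Hoang--Kahle--Nam). Your proposal therefore supplies content the paper omits, and it is correct: the identification $\calJ_{K_m,K_n}=I_2(\bdX)$ is immediate, Cohen--Macaulayness follows from \Cref{lem:generic-Cohen--Macaulay}, and your Hilbert-series computation is the standard one for the Segre variety --- indeed, the very formula you derive is precisely what the paper quotes from Conca--Herzog in the proof of \Cref{prop:hilb}.

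One small caution about your shorter alternative via \Cref{lem:reg-min-equal}: both that lemma and the present one are cited from the same paper of Kumar (his Theorems~3.6--3.7 versus his Proposition~3.3), and the equality case of the former may well depend on the latter in his development. So as an \emph{independent} derivation, your Hilbert-series route is the cleaner choice; within the internal logic of the present paper, however, either route is acceptable since both are treated as imported black boxes.
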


Now, we will discuss the primary decomposition and the dimension of $\calJ_{K_m,G}$.

\begin{Proposition}
    \label{r_dimension}
    Under \Cref{r_setting}, the minimal primary decomposition is
    \[
        \mathcal{J}_{K_m,G}=\mathcal{J}_{K_m,\widetilde{G}}\cap A_{s}\cap A_{s+1} \cap \cdots\cap A_r,
    \]
    and the dimension is
    \[
        \dim(S/\mathcal{J}_{K_m,G})=\max\{m+n-1, mn_r\}.
    \]
\end{Proposition}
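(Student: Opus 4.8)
The plan is to derive the primary decomposition from \Cref{lem:decompo} by identifying the subsets $T \subseteq [n]$ with the cut point property, and then read off the Krull dimension from the primary decomposition.

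\medskip

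\emph{Step 1: Identify $\calC(G)$.} First I would show that the only subsets of $V(G)$ with the cut point property are $T = \emptyset$ and $T = T_i$ for those $i$ with $n_i \ge 2$ (i.e.\ $i \in \{s, s+1, \dots, r\}$). The key structural observation is that a complete $r$-partite graph $K_{n_1,\dots,n_r}$ is connected, and for a subset $T$, the graph $G \setminus T$ stays connected unless $T$ swallows all but one of the parts: if $T$ omits vertices from two distinct parts $V_i$ and $V_j$, then every remaining vertex is joined to something in the other part, so $G \setminus T$ is connected. Hence $G \setminus T$ is disconnected only when $T \supseteq T_i = \bigsqcup_{j \ne i} V_j$ for some $i$, and in that case $G \setminus T$ is a disjoint union of isolated vertices (a subset of $V_i$). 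Now I would check the cut point property itself: for $v \in T$, I need $v$ to be a cut point of $G \setminus (T \setminus \{v\})$. If $T = T_i$ with $n_i \ge 2$, then for any $v \in T_i = T$, the graph $G \setminus (T \setminus \{v\})$ is $K_{n_1,\dots,n_r}$ restricted to $V_i \cup \{v\}$, which is a star with center $v$ (since $v$ lies in some $V_j$, $j\ne i$, and is adjacent to all of $V_i$, while $V_i$ is independent); removing $v$ disconnects the $\ge 2$ vertices of $V_i$, so $v$ is indeed a cut point. Conversely, if $T$ is strictly larger than some $T_i$ or is not of this form, one checks the property fails (e.g.\ if $T = V(G)$ minus a single vertex of some $V_i$ with $n_i = 1$, then $G\setminus(T\setminus\{v\})$ is a single edge or a star and $v$ need not be a cut point; and any $T$ not containing some full $T_i$ leaves $G\setminus T$ connected, so no $v\in T$ can be a cut point of the still-connected $G\setminus(T\setminus\{v\})$ unless $|T|=1$, which also fails since removing one vertex never disconnects $K_{n_1,\dots,n_r}$ when $r\ge 2$). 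So $\overline{\calC}(G) = \{T_s, T_{s+1}, \dots, T_r\}$.

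\medskip

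\emph{Step 2: Compute $P_T(K_m,G)$ for these $T$.} For $T = \emptyset$, $G|_{\bar T} = G$ is connected, $\widetilde{G_1} = \widetilde G = K_n$, so $P_\emptyset(K_m,G) = \calJ_{K_m,\widetilde G}$. For $T = T_i$, the components of $G|_{\bar{T_i}} = G|_{V_i}$ are the $n_i$ isolated vertices of $V_i$; each complete graph on a single vertex contributes nothing, so $P_{T_i}(K_m,G) = (x_{kj} : k \in [m], j \in T_i) = A_i$. Plugging into \Cref{lem:decompo} gives exactly $\calJ_{K_m,G} = \calJ_{K_m,\widetilde G} \cap A_s \cap \cdots \cap A_r$, and minimality is part of the statement of \Cref{lem:decompo}.

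\medskip

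\emph{Step 3: Dimension.} Since $\dim(S/I \cap J) = \max\{\dim(S/I), \dim(S/J)\}$, it suffices to compute the dimension of each component. For $\calJ_{K_m,\widetilde G} = \calJ_{K_m,K_n} = I_2(\bdX)$, \Cref{lem:generic-Cohen--Macaulay} with $t=2$ gives height $(m-1)(n-1)$, hence $\dim(S/\calJ_{K_m,\widetilde G}) = mn - (m-1)(n-1) = m+n-1$. For $A_i = (x_{kj} : k\in[m], j\in T_i)$, which is generated by $m|T_i| = m(n - n_i)$ variables, we get $\dim(S/A_i) = mn - m(n-n_i) = mn_i$; over $i \in \{s,\dots,r\}$ the maximum is $mn_r$ since $n_s \le \cdots \le n_r$. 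Therefore $\dim(S/\calJ_{K_m,G}) = \max\{m+n-1, mn_s, \dots, mn_r\} = \max\{m+n-1, mn_r\}$.

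\medskip

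\emph{Main obstacle.} The routine part is Steps 2 and 3, which are essentially bookkeeping once Step 1 is in hand. The genuine work is Step 1, the combinatorial determination of $\calC(G)$: one must carefully verify both that every $T_i$ with $n_i\ge 2$ has the cut point property and that no other nonempty subset does. The subtlety is in the "only these" direction — ruling out all spurious candidates $T$, especially those that partially overlap several parts or that arise when several of the $n_i$ equal $1$ — and in handling the degenerate cases (small $n_i$, the vertices coming from singleton parts) cleanly. I would organize this as a short lemma on connectivity of induced subgraphs of complete multipartite graphs, then apply the definition of the cut point property mechanically.
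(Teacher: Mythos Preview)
Your proposal is correct and follows the same overall strategy as the paper: reduce to determining $\calC(G)$ via \Cref{lem:decompo}, identify $P_T(K_m,G)$ for each relevant $T$, and then read off the Krull dimension from the components. The only substantive difference is in how Step~1 is executed. The paper observes that $G$ is the join $G_1 * \cdots * G_r$ of the empty graphs on $V_1,\dots,V_r$ and then invokes \cite[Propositions 4.1, 4.5 and 4.14]{MR3395714} to obtain $\calC(G)=\{\emptyset,T_s,\dots,T_r\}$ in one line, whereas you carry out the connectivity and cut-point verification directly by hand. Your route is more elementary and self-contained; the paper's is shorter but relies on the external structure theory of cut sets for join graphs. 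One spot in your write-up that could be tightened is the converse direction: the cleanest phrasing is that if $T\ne\emptyset$ has the cut point property then necessarily $c(G\setminus T)\ge 2$, which forces $T\supseteq T_i$ for some $i$ by your connectivity observation; and if $T\supsetneq T_i$, then any $w\in T\cap V_i$ fails the test because $G\setminus(T\setminus\{w\})$ consists entirely of isolated vertices inside $V_i$, so removing $w$ decreases rather than increases the number of components.
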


\begin{proof} 
    Notice that $\dim(S/\mathcal{J}_{K_m,\widetilde{G}})=m+n-1$ by \Cref{lem:generic-Cohen--Macaulay} and $\dim(S/A_i)=mn_i$ for each $i\in [r]$. Thus, by \Cref{lem:decompo}, it suffices to show that
    \begin{equation}
        \calC(G)=\Set{\emptyset, T_s, T_{s+1},\dots, T_r}.
        \label{eqn:r_dim_cut_set}
    \end{equation}
    Let $G_i$ be the empty graph on the vertex set $V_i$ for $i\in [r]$. Then, $G$ is the join graph $G_1 * \cdots * G_r$ in the sense of \cite[Section 4]{MR3395714}.
    Therefore, the equality of \eqref{eqn:r_dim_cut_set} follows from \cite[Propositions 4.1, 4.5 and 4.14]{MR3395714}. \qedhere
\end{proof}

Using the primary decomposition of $\calJ_{K_m,G}$, we can calculate the depth of $S/\calJ_{K_m,G}$.

\begin{Theorem}
    \label{thm:depth_r_partite}
    Under \Cref{r_setting}, we have $\depth(S/\mathcal{J}_{K_m,{G}})=m+n_s$.
\end{Theorem}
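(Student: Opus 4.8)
The strategy is to compute $\depth(S/\calJ_{K_m,G})$ by exploiting the primary decomposition established in \Cref{r_dimension}, namely
\[
    \calJ_{K_m,G} = \calJ_{K_m,\widetilde G} \cap A_s \cap A_{s+1} \cap \cdots \cap A_r.
\]
I would set $Q \coloneqq A_{s+1}\cap \cdots \cap A_r$ (an intersection of monomial prime ideals) and $I \coloneqq \calJ_{K_m,\widetilde G}\cap Q$, and then analyze the two short exact sequences
\[
    0 \to S/\bigl(\calJ_{K_m,G}\bigr) \to S/\calJ_{K_m,\widetilde G} \oplus S/(A_s\cap Q) \to S/\bigl(\calJ_{K_m,\widetilde G} + A_s\cap Q\bigr) \to 0
\]
and, as an auxiliary step, the Mayer--Vietoris-type sequence for $S/(A_s\cap Q)$ itself. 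The key numerical inputs are: $\depth(S/\calJ_{K_m,\widetilde G}) = m+n-1$ (since $\widetilde G=K_n$ is Cohen--Macaulay of that dimension by \Cref{lem:generic-Cohen--Macaulay}); each $A_i$ is generated by a subset of the variables, so $S/A_i$ is a polynomial ring of depth $mn_i$; and intersections/sums of such monomial primes are again monomial, with depths read off directly. One expects the bottleneck term to be $A_s$, whose quotient has the smallest depth $mn_s$ among the $A_i$ with $i\ge s$, while $\calJ_{K_m,\widetilde G}$ contributes $m+n-1 \ge mn_s$ — but the "mixed" terms $\calJ_{K_m,\widetilde G}+A_i$ are the ones that need care.

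The heart of the matter is to understand the sum $\calJ_{K_m,\widetilde G} + A_i$ for $i\ge s$. I claim that $\calJ_{K_m,\widetilde G} + A_i = A_i + \calJ_{K_m,\widetilde{G_i}'}$ where, after killing the variables in $A_i = (x_{kj}: k\in[m], j\in T_i)$, what survives is the generalized binomial edge ideal of the complete graph on $V_i$ — i.e.\ $S/(\calJ_{K_m,\widetilde G}+A_i) \cong \KK[x_{kj}: k\in[m],\ j\in V_i]/\calJ_{K_m,K_{n_i}}$. This is because setting $x_{kj}=0$ for $j\notin V_i$ annihilates every $2$-minor $[k,l\,|\,a,b]$ of $\calJ_{K_m,\widetilde G}$ that involves a column outside $V_i$, leaving exactly the minors with both columns in $V_i$. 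Hence $S/(\calJ_{K_m,\widetilde G}+A_i)$ is Cohen--Macaulay of dimension $m+n_i-1$ by \Cref{lem:generic-Cohen--Macaulay}, so its depth is $m+n_i-1$. More generally, intersecting several $A_i$'s with $\calJ_{K_m,\widetilde G}$ and/or each other, one gets quotients that are (tensor products over $\KK$ of) determinantal rings of complete graphs on the individual parts together with free polynomial factors, all Cohen--Macaulay, with depth computable as a sum. Iterating \Cref{depthlemma}\eqref{depthlemma-a} along a carefully ordered chain of short exact sequences peeling off $A_r, A_{r-1},\dots, A_{s+1}$ and finally $A_s$, and tracking which depth is strictly smaller at each stage (so that equality holds, not just the inequality), I would arrive at $\depth(S/\calJ_{K_m,G}) = m + n_s$.

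Concretely, I would proceed by reverse induction on the number of parts $A_i$ peeled off. The base comparison is between $S/(\calJ_{K_m,\widetilde G}\cap A_r)$: from $0\to S/(\calJ_{K_m,\widetilde G}\cap A_r)\to S/\calJ_{K_m,\widetilde G}\oplus S/A_r \to S/(\calJ_{K_m,\widetilde G}+A_r)\to 0$, with $\depth(S/\calJ_{K_m,\widetilde G})=m+n-1$, $\depth(S/A_r)=mn_r$, $\depth(S/(\calJ_{K_m,\widetilde G}+A_r))=m+n_r-1$. One checks $m+n_r-1 < mn_r$ (as $n_r\ge 2$ forces $mn_r - n_r = n_r(m-1)\ge m > m-1$, wait — need $mn_r > m+n_r-1$, i.e.\ $(m-1)(n_r-1)>0$, true since $m,n_r\ge 2$) and $m+n_r-1\le m+n-1$, so the middle depth is $\min = m+n_r-1$ unless $m+n-1 = m+n_r-1$... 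I would handle the tie-breaking case $r=s$, $n=n_r$ separately. The main obstacle, and where I expect to spend the most effort, is the careful bookkeeping of these strict-vs-nonstrict depth comparisons across the whole chain: at each peeling step I must verify that $\depth(N)\ne\depth(P)$ in \Cref{depthlemma}\eqref{depthlemma-a} so that equality (not merely $\ge$) propagates, and separately handle the degenerate configurations (e.g.\ $s=r$, or several $n_i$ equal) where the inequality could be non-strict. Once that combinatorial-arithmetic comparison is nailed down, the conclusion $\depth(S/\calJ_{K_m,G}) = m+n_s$ follows mechanically, since $n_s = \min\{n_i : n_i\ge 2\}$ is exactly the smallest "nontrivial" part and $m+n_s$ is dominated by every other term appearing in the chain.
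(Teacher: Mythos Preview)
Your approach is correct and is essentially the paper's: both exploit the primary decomposition of \Cref{r_dimension} together with iterated Mayer--Vietoris sequences and \Cref{depthlemma}\ref{depthlemma-a}. The only structural difference is the order of peeling. The paper intersects with $A_s$ first and works up to $A_r$; to keep the depth comparison strict at each stage it writes $\fraka_k=\fraka_{k-1}\cap(\calJ_{K_m,\widetilde G}\cap A_k)$ and invokes the identity $\fraka_{k-1}+(\calJ_{K_m,\widetilde G}\cap A_k)=\calJ_{K_m,\widetilde G}$, so that the rightmost quotient always has depth $m+n-1$. Your reverse order (intersect with $A_r$ first, then descend to $A_s$) is the one the paper employs in the \emph{regularity} proof (\Cref{thm:reg_r_partite}), and it works for depth as well without that extra identity: with $\frakc_{k+1}\coloneqq\calJ_{K_m,\widetilde G}\cap A_{k+1}\cap\cdots\cap A_r$ one gets $\depth(S/\frakc_{k+1})=m+n_{k+1}$, $\depth(S/A_k)=mn_k$, and $\depth(S/(\frakc_{k+1}+A_k))=m+n_k-1$, and both middle depths strictly exceed the last because $n_{k+1}\ge n_k\ge 2$ and $(m-1)(n_k-1)>0$. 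So the tie-breaking you worry about never arises, and the case $n=n_r$ cannot occur under \Cref{r_setting} since $r\ge 2$.

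Two points to tighten. First, your claim that intersecting several $A_i$'s with $\calJ_{K_m,\widetilde G}$ yields Cohen--Macaulay ``tensor products of determinantal rings'' is false (already $A_i\cap A_j$ is not generated by variables) and, more importantly, unnecessary. What you actually need is that $\frakc_{k+1}+A_k=A_k+\calJ_{K_m,\widetilde G|_{V_k}}$ for \emph{every} $k\in[s,r]$, not just for $k=r$; you only argued the latter. The general case holds because any $2$-minor $[i,j\,|\,a,b]$ with $a,b\in V_k$ already lies in each $A_l$ for $l>k$ (its column indices avoid $V_l$), hence lies in $\frakc_{k+1}$ and survives modulo $A_k$. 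Once this is stated, the induction gives $\depth(S/\frakc_k)=m+n_k$ and in particular $\depth(S/\calJ_{K_m,G})=\depth(S/\frakc_s)=m+n_s$.
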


\begin{proof}
    To utilize the primary decomposition $\calJ_{K_m,G}=\calJ_{K_m,\widetilde{G}}\cap A_{s}\cap \cdots \cap A_r$ from \Cref{r_dimension}, we introduce $\fraka_{s-1}\coloneqq \calJ_{K_m,\widetilde{G}}$ and $\fraka_k\coloneqq \calJ_{K_m,\widetilde{G}}\cap A_{s}\cap \cdots \cap A_k$ for $s\le k\le r$.
    With these preparations,
    for each $k\in [s,r]$, we have
    \[
        0\to S/(\fraka_{s-1}\cap A_k) \to S/\fraka_{s-1} \oplus S/A_k \to S/(\fraka_{s-1}+A_k) \to 0.
    \]
    Note that $\depth(S/\fraka_{s-1})=m+n-1$, $\depth(S/A_k)=mn_k$ and $\depth(S/(\fraka_{s-1}+A_k))=m+n_k-1$. Since $m\ge 2$,  $r\ge 2$ and $n_k\ge 2$, we have $\min\{m+n-1,mn_k\}>m+n_k-1$. So $\depth(S/(\fraka_{s-1}\cap A_k))=m+n_k$ by \Cref{depthlemma}\ref{depthlemma-a}.

    Next, we prove by induction on $k\in [s,r]$ that $\depth(S/\fraka_k)=m+n_{s}$. The base case when $k=s$ is clear from the above calculation, since $\fraka_s=\fraka_{s-1}\cap A_s$. Now, suppose that $k\ge s+1$ and we consider the  short exact sequence
    \[
        0\to S/\fraka_{k} \to S/\fraka_{k-1}\oplus S/(\fraka_{s-1}\cap A_k) \to S/(\fraka_{k-1}+\fraka_{s-1}\cap A_k)\to 0.
    \]
    By induction, we have $\depth(S/\fraka_{k-1})=m+n_{s}$. By the calculation in the previous part, we have $\depth(S/(\fraka_{s-1}\cap A_k))=m+n_k$. Let $A_k^{\complement}\coloneqq (x_{ij}:i\in [m], j\in V_k)\subset S$. Then, we have
    \[
        \fraka_{s-1} \supseteq \fraka_{k-1}+\fraka_{s-1}\cap A_k\supseteq \fraka_{s-1}\cap A_k^{\complement}+\fraka_{s-1}\cap A_k \supseteq \fraka_{s-1}.
    \]
    So  $\fraka_{s-1}=\fraka_{k-1}+\fraka_{s-1}\cap A_k$ and $\depth(S/(\fraka_{k-1}+ \fraka_{s-1}\cap A_k))=\depth(S/\fraka_{s-1})=m+n-1$. Since $k\ge s+1$, we have
    \[
        \min\{m+n_{s},m+n_k\}=m+n_{s}<m+n_s+n_{s+1}-1\le m+n-1.
    \]
    So  we obtain $\depth(S/\fraka_k)=m+n_{s}$ by \Cref{depthlemma}\ref{depthlemma-a}.

    In particular, we can take $k=r$ and obtain that 
    \[
        \depth(S/\calJ_{K_m,G})=\depth(S/\fraka_r)=m+n_{s}. \qedhere
    \]
\end{proof}

As a corollary to \Cref{thm:depth_r_partite}, we will consider the cohomological dimension of $\calJ_{K_m,G}$. Recall that in a Noetherian ring $R$, the \emph{cohomological dimension} $\cd_R(I)$ of an ideal $I$ is defined as follows:
\[
    \cd_R(I)\coloneqq \max\{i\in \ZZ: H_I^{i}(R)\ne 0\}.
\]
Here, $H_I^{i}(R)$ represents the $i$-th local
cohomology module of $R$ with support in $I$. It is
worth noting that the star graph is a special
complete bipartite graph, and the following result
is a generalization of \cite[Proposition
2.12]{arXiv:2207.02256}. 

\begin{Corollary}
    Under \Cref{r_setting}, we have the following results:
    \begin{enumerate}[a]
        \item  If the characteristic of the  field  $\KK$ is positive, then 
            \[
                \cd(\calJ_{K_m,G})= mn-m-n_s.
            \]
        \item  If the characteristic of the  field  $\KK$ is zero, then 
            \[
                mn-m-n_s\le \cd(\calJ_{K_m,G})\le mn-3.
            \]
    \end{enumerate}
\end{Corollary}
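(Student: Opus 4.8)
The plan is to translate the depth statement of \Cref{thm:depth_r_partite} into a cohomological dimension statement via Grothendieck local duality, and then to invoke the known general bounds on cohomological dimension due to Ogus, Peskine--Szpiro, and Hartshorne--Lichtenbaum. First I would set $R\coloneqq S/\calJ_{K_m,G}$ and recall that, since $S$ is a polynomial ring with graded maximal ideal $\frakm$ of height $mn$, there is a general formula relating $\cd_S(\calJ_{K_m,G})$ to the local cohomology of $R$. Indeed, $H^i_{\calJ_{K_m,G}}(S)$ depends only on the radical of $\calJ_{K_m,G}$, and by \Cref{lem:decompo} the ideal is already radical; moreover one has the standard fact that $\cd_S(I) = \cd_S(I) $ can be computed from $R=S/I$ together with the ambient regular ring. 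The cleanest route is: in positive characteristic, Peskine--Szpiro / Ogus (or the Huneke--Lyubeznik refinement) gives that for an ideal $I$ in a regular ring $S$ of dimension $N$ such that $S/I$ is Cohen--Macaulay-free of ``bad'' behavior, $\cd_S(I) = N - \depth(S/I)$; but in general what is always true in characteristic $p>0$ is the formula $\cd_S(I) = \mathrm{pd}_S(S/I)$ when $S/I$ has finite projective dimension, which it does here since $S$ is regular, combined with the Auslander--Buchsbaum formula $\mathrm{pd}_S(S/I) = \depth(S) - \depth(S/I) = mn - \depth(S/\calJ_{K_m,G})$. Substituting $\depth(S/\calJ_{K_m,G}) = m + n_s$ from \Cref{thm:depth_r_partite} yields $\cd(\calJ_{K_m,G}) = mn - m - n_s$, proving (a).

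For the lower bound in part (b), valid in any characteristic, I would use that $\cd_S(I) \ge \mathrm{pd}_S(S/I) = mn - \depth(S/\calJ_{K_m,G}) = mn - m - n_s$: this inequality $\cd_S(I)\ge \mathrm{pd}_S(S/I)$ holds over any regular ring (it is the ``cohomological-dimension $\ge$ projective-dimension'' bound coming from the fact that the top nonvanishing local cohomology detects projective dimension). Alternatively, and perhaps more transparently for the reader, the bound $\cd_S(I) \ge \operatorname{ara}(I) \ge \mathrm{ht}(I)$ together with a sharper lower bound is not enough — one really wants $mn - m - n_s$, so I would phrase the lower bound through $\mathrm{pd}$ as above, citing the Auslander--Buchsbaum formula and the general inequality $\cd \ge \mathrm{pd}$ (e.g.\ via \cite{MR1251956} or Lyubeznik's survey).

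For the upper bound $\cd(\calJ_{K_m,G}) \le mn - 3$ in characteristic zero, the key input is a theorem on cohomological dimension of ideals whose quotient rings are connected in codimension one (or, more precisely, whose punctured spectrum is suitably connected). Concretely, I would invoke the Ogus--Hartshorne-type result: if $S$ is a regular ring of dimension $N$ containing a field of characteristic zero and $V(I)$ is connected in dimension $\ge 2$ (equivalently, removing any closed subset of dimension $\le N - 3$ from $\Proj(S/I)$ leaves it connected), then $\cd_S(I) \le N - 3$. So the real content is a \emph{geometric/combinatorial} claim: $\Spec(S/\calJ_{K_m,G})$ is connected in codimension $2$. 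This should follow from the explicit primary decomposition $\calJ_{K_m,G} = \calJ_{K_m,\widetilde G}\cap A_s \cap \cdots \cap A_r$ in \Cref{r_dimension}: each $A_i$ and $\calJ_{K_m,\widetilde G}$ defines an irreducible variety, and one checks that the dual graph of this collection of components (with edges recording intersections of large dimension) is connected and that pairwise intersections have codimension $\le 2$ in each component — here one uses that $\calJ_{K_m,\widetilde G}$, the determinantal component, meets each $A_i$ in something of codimension small relative to the ambient dimension, exploiting $m\ge 2$, $r \ge 2$, $n_r \ge 2$.

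The main obstacle I anticipate is the characteristic-zero upper bound: verifying the connectedness-in-codimension-two hypothesis rigorously. One must (i) identify the correct connectivity condition equivalent to $\cd \le N-3$ (the statement is due to Ogus in char $0$, refined by Hartshorne--Speiser/Huneke--Lyubeznik; in char $p$ an analogous statement holds, which is why part (a) can be stated as an equality rather than just a two-sided bound — but here (a) is proven directly from depth), and (ii) check it for our ideal using the primary decomposition. The determinantal piece $V(\calJ_{K_m,\widetilde G}) = V(I_2(\bdX))$ is itself connected in codimension one (generic determinantal varieties are), and the linear pieces $V(A_i)$ are linear subspaces; the containments displayed in the proof of \Cref{thm:depth_r_partite}, namely $\fraka_{s-1} = \fraka_{k-1} + \fraka_{s-1}\cap A_k$, already encode strong intersection relations that should be repackaged into the connectivity statement. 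I would expect to need a lemma asserting: the union of the determinantal variety $V(I_2(\bdX))$ with the coordinate subspaces $V(A_s), \dots, V(A_r)$ is connected in codimension $2$, proved by an explicit codimension count of all pairwise and total intersections using \Cref{lem:generic-Cohen--Macaulay} and $\dim(S/A_i) = mn_i$. With that lemma, part (b) follows immediately, and parts (a) and the lower bound of (b) follow formally from \Cref{thm:depth_r_partite} via Auslander--Buchsbaum and the Peskine--Szpiro vanishing theorem.
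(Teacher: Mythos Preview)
Your proposal has a genuine gap in the lower bound, and hence in the equality of part~(a). You assert that $\cd_S(I)\ge \projdim_S(S/I)$ over any regular ring, justified by the claim that ``the top nonvanishing local cohomology detects projective dimension''. This is incorrect: it is the top nonvanishing $\Ext^i_S(S/I,S)$ that detects $\projdim_S(S/I)$, whereas $H^i_I(S)=\varinjlim_n \Ext^i_S(S/I^n,S)$ is a direct limit, and nonvanishing need not survive it. A concrete counterexample is $I=(x^2,xy)\subset \KK[x,y]$: here $\sqrt{I}=(x)$ gives $\cd_S(I)=1$, while $\depth(S/I)=0$ gives $\projdim_S(S/I)=2$. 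Consequently your claimed identity $\cd_S(I)=\projdim_S(S/I)$ in positive characteristic is also false in general; Peskine--Szpiro yields only the inequality $\cd_S(I)\le\projdim_S(S/I)$. Even restricting to radical ideals, the bound $\cd\ge\projdim$ is not a standard fact, and your sketch does not establish it for $\calJ_{K_m,G}$.

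The paper supplies the missing ingredient you omit: $\calJ_{K_m,G}$ admits a \emph{squarefree initial ideal} with respect to a suitable monomial order (\cite[Theorem~2]{MR3011436}). With this in hand, \cite[Proposition~3.6]{MR4132955} combined with \Cref{thm:depth_r_partite} yields $\cd(\calJ_{K_m,G})\ge mn-\depth(S/\calJ_{K_m,G})=mn-m-n_s$ in every characteristic, with equality when $\Char(\KK)>0$. The squarefree-degeneration hypothesis is precisely what replaces your unjustified general inequality.

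For the upper bound in characteristic zero, your plan to verify connectedness in codimension two from the primary decomposition is a legitimate route in principle, but the paper's argument is far shorter: since $\depth(S/\calJ_{K_m,G})=m+n_s\ge 3$ by \Cref{thm:depth_r_partite}, one obtains $\cd(\calJ_{K_m,G})\le mn-3$ directly from \cite[Theorem~3.5]{MR3078644}. No analysis of how the components $V(\calJ_{K_m,\widetilde G}),V(A_s),\ldots,V(A_r)$ intersect is needed.
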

\begin{proof}
    Based on \cite[Theorem 2]{MR3011436},
    $\calJ_{K_m,G}$ has a squarefree initial ideal
    with respect to some monomial order on $S$.
    Therefore, by applying
    \Cref{thm:depth_r_partite} and
    \cite[Proposition 3.6]{MR4132955}, we can infer
    that $\cd(\calJ_{K_m,G})\ge mn-m-n_s$, with
    equality when the characteristic of $\KK$ is
    positive.  Additionally, since
    $\depth(R/\calJ_{K_m,G})=m+n_s\ge 3$, as per
    \Cref{thm:depth_r_partite}, we can conclude
    that $\cd(\calJ_{K_m,G})\le mn-3$, according to
    \cite[Theorem 3.5]{MR3078644}. 
\end{proof}

Next, we will compute the regularity of $S/\calJ_{K_m,G}$ under \Cref{r_setting}. Before proceeding, let us examine a special case.

\begin{Lemma}
    \label{star}
    For $m\ge 2$, we have $\reg(S/\calJ_{K_m,K_{1,m}})=m$.
\end{Lemma}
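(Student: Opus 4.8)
The plan is to leverage the primary decomposition recorded in \Cref{r_dimension}. The star $K_{1,m}$ is a complete bipartite graph fitting \Cref{r_setting} with $r=2$, $n_1=1$, $n_2=m$, $s=2$, and $\widetilde G=K_{m+1}$; hence \Cref{r_dimension} gives $\calJ_{K_m,K_{1,m}}=\calJ_{K_m,K_{m+1}}\cap A$ with $A=A_2=(x_{i1}:i\in[m])$. I would then work with the Mayer--Vietoris short exact sequence
\[
  0\to S/\bigl(\calJ_{K_m,K_{m+1}}\cap A\bigr)\to S/\calJ_{K_m,K_{m+1}}\oplus S/A\to S/\bigl(\calJ_{K_m,K_{m+1}}+A\bigr)\to 0 .
\]

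First I would pin down the three flanking modules. By \Cref{lem:generic-Cohen--Macaulay,reg-m-n}, $S/\calJ_{K_m,K_{m+1}}$ is Cohen--Macaulay of dimension $2m$ with $\reg=m-1$; $S/A$ is a polynomial ring in $m^2$ variables ($\reg=0$, dimension $m^2$); and reducing modulo $A$ annihilates exactly the $2$-minors of $\bdX$ involving the first column, so $S/(\calJ_{K_m,K_{m+1}}+A)\cong\KK[\bdX']/I_2(\bdX')=\KK[\bdX']/\calJ_{K_m,K_m}$ for a generic $m\times m$ matrix $\bdX'$, which is Cohen--Macaulay of dimension $2m-1$ with $\reg=m-1$ (again \Cref{lem:generic-Cohen--Macaulay,reg-m-n}; the $m$ deleted variables affect neither dimension nor regularity). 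Feeding these into \Cref{depthlemma}\ref{depthlemma-c} at once yields the upper bound $\reg(S/\calJ_{K_m,K_{1,m}})\le\max\{m-1,(m-1)+1\}=m$.

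The main obstacle is the reverse inequality: the two flanking regularities both equal $m-1$, so the equality clause of \Cref{depthlemma}\ref{depthlemma-c} is unavailable, and I would instead extract the lower bound from the long exact sequence in local cohomology attached to the sequence above. Since $S/\calJ_{K_m,K_{m+1}}$ and $S/A$ are Cohen--Macaulay of dimensions $2m$ and $m^2$, both different from $2m-1$ when $m\ge 2$, we have $H^{2m-1}_{\frakm}(S/\calJ_{K_m,K_{m+1}})=H^{2m-1}_{\frakm}(S/A)=0$; therefore the connecting homomorphism embeds $H^{2m-1}_{\frakm}\bigl(S/(\calJ_{K_m,K_{m+1}}+A)\bigr)$ into $H^{2m}_{\frakm}(S/\calJ_{K_m,K_{1,m}})$. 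Because $S/(\calJ_{K_m,K_{m+1}}+A)$ is Cohen--Macaulay of dimension $2m-1$ with $\reg=m-1$, \Cref{lem:basic-facts} gives $a_{2m-1}\bigl(S/(\calJ_{K_m,K_{m+1}}+A)\bigr)=(m-1)-(2m-1)=-m$, so its top local cohomology is nonzero in degree $-m$.

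Pushing that nonzero class through the embedding forces $\bigl(H^{2m}_{\frakm}(S/\calJ_{K_m,K_{1,m}})\bigr)_{-m}\ne 0$, hence $a_{2m}(S/\calJ_{K_m,K_{1,m}})\ge -m$ and $\reg(S/\calJ_{K_m,K_{1,m}})\ge a_{2m}(S/\calJ_{K_m,K_{1,m}})+2m\ge m$; together with the upper bound this gives $\reg(S/\calJ_{K_m,K_{1,m}})=m$. The only points needing care in the writeup are the explicit isomorphism $S/(\calJ_{K_m,K_{m+1}}+A)\cong\KK[\bdX']/\calJ_{K_m,K_m}$ and the bookkeeping with the numerical relations among $2m$, $2m-1$ and $m^2$ that guarantee the vanishing of the local cohomology modules used above (all of which hold for $m\ge 2$, the borderline case $m=2$ included).
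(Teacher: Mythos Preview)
Your proof is correct and follows the same Mayer--Vietoris/local cohomology approach as the paper, with one pleasant improvement: by extracting only the injection $H^{2m-1}_{\frakm}\bigl(S/(\calJ_{K_m,K_{m+1}}+A)\bigr)\hookrightarrow H^{2m}_{\frakm}(S/\calJ_{K_m,K_{1,m}})$ (which, as you note, needs only $2m\ne 2m-1$ and $m^2\ne 2m-1$), you handle all $m\ge 2$ uniformly, whereas the paper separates off $m=2$ by citation because its full decomposition of the local cohomology into the pieces \eqref{eqn:123-1}--\eqref{eqn:123-3} requires $m^2\ne 2m$.
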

\begin{proof}
    We will treat $K_{1,m}$ here as the complete $r$-partite graph $G$ in \Cref{r_setting}, by taking $r=2$, $n_1=1$, and $n_2=m$. When $m=2$, the result can be derived from \cite[Theorem 4.1]{MR3195706}. In the following, we will assume that $m\ge 3$. It follows from \Cref{r_dimension} that there exists a short exact sequence
    \[
        0\rightarrow  S/\calJ_{K_m,G} \rightarrow  S/\calJ_{K_m,\widetilde{G}}\oplus  S/A_2\rightarrow S/(\calJ_{K_m,\widetilde{G}}+A_2)\rightarrow 0.
    \]
    Note that  $S/\calJ_{K_m,\widetilde{G}}$, $S/A_2$ and $S/(\calJ_{K_m,\widetilde{G}}+A_2)$ are Cohen--Macaulay of dimensions $2m$, $m^2$ and $2m-1$, respectively, by \Cref{lem:generic-Cohen--Macaulay}. 
    Since $m\ge 3$, we have $m^2>2m$.
    Applying the local cohomology functor with respect to $\frakm$ to the above exact sequence, we obtain from \Cref{lem:basic-facts} that 
    \begin{gather}
        0 \to H_{\frakm}^{2m-1}(S/(\calJ_{K_m,\widetilde{G}}+A_2)) \to H_{\frakm}^{2m}(S/\calJ_{K_m,G}) \to H_{\frakm}^{2m}(S/\calJ_{K_m,\widetilde{G}})\to 0, \label{eqn:123-1}\\
        H_{\frakm}^{m^2}(S/\calJ_{K_m,G})\cong H_{\frakm}^{m^2}(S/A_2), \label{eqn:123-2}
        \\
        \intertext{and}
        H_{\frakm}^i(S/\calJ_{K_m,G})=0 \qquad \text{for all $i\notin \{2m,m^2\}$.} \label{eqn:123-3}
    \end{gather}
    
    Note that
    \begin{align*}
        \reg(S/\calJ_{K_m,G})&=\max\{a_i(S/\calJ_{K_m,G})+i: \depth(S/\calJ_{K_m,G})\le i\le \dim(S/\calJ_{K_m,G})\}\\
        &\xlongequal{\eqref{eqn:123-3}} 
        \max\{a_{2m}(S/\calJ_{K_m,G})+2m, a_{m^2}(S/\calJ_{K_m,G})+m^2\}.
    \end{align*}
    Since $S/A_2$ is a polynomial ring of dimension $m^2$ with regularity  $0$, it follows from \eqref{eqn:123-2} that 
    \[
        a_{m^2}(S/\calJ_{K_m,G})=-m^2. 
    \]
    Meanwhile, $S/\calJ_{K_m,\widetilde{G}}$ is Cohen--Macaulay of dimension $2m$ and $\reg(S/\calJ_{K_m,\widetilde{G}})=m-1$ by \Cref{reg-m-n}. So $a_{2m}(S/\calJ_{K_m,\widetilde{G}})=(m-1)-2m=-m-1$. Similarly, we get $a_{2m-1}(S/(\calJ_{K_m,\widetilde{G}}+A_2))=(m-1)-(2m-1)= -m$. Since the short exact sequence \eqref{eqn:123-1} is  graded, we get 
    \begin{align*}
        a_{2m}(S/\calJ_{K_m,G})&=
        \max\{-m-1,-m\}
        =-m.
    \end{align*}
    Consequently, $\reg(S/\calJ_{K_m,G})=m$.
\end{proof}

\begin{Theorem}
    \label{thm:reg_r_partite}
    Under \Cref{r_setting}, we have
    \[
        \reg(S/\mathcal{J}_{K_m,{G}})=\begin{cases}
            n-1, & \text{if $m\ge n$,}\\
            m-1, &\text{if $n>m>n_r$,} \\
            m, &\text{if $n_r\ge m\ge 2$.} 
        \end{cases}
    \]
\end{Theorem}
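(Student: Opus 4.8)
The plan is as follows. The case $m\ge n$ is immediate: $G$ is connected on $[n]$ with $m\ge n\ge 2$, so $\reg(S/\calJ_{K_m,G})=n-1$ by \Cref{lem:reg-min-equal}. Assume henceforth $n>m$; I must show $\reg(S/\calJ_{K_m,G})=m-1$ when $m>n_r$ and $\reg(S/\calJ_{K_m,G})=m$ when $n_r\ge m$, and I do so by matching lower and upper bounds. For the \emph{lower bound} I use comparison with induced subgraphs via \Cref{lem:induced-graph}. If $n_r\ge m$, then $K_{1,m}$ is an induced subgraph of $G$: put the $m$ leaves inside the largest part $V_r$ (which is independent and has at least $m$ vertices) and the center in any other part (one exists since $r\ge2$), so $\reg(S/\calJ_{K_m,G})\ge\reg(S/\calJ_{K_m,K_{1,m}})=m$ by \Cref{star}. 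If instead $m>n_r$, choose any $m$ vertices of $G$ (possible as $n>m$); since every part has size $<m$, these vertices meet at least two parts, so the induced subgraph $H$ on them is a connected complete multipartite graph on $m$ vertices, whence $\reg(S/\calJ_{K_m,H})=m-1$ by \Cref{lem:reg-min-equal} and $\reg(S/\calJ_{K_m,G})\ge m-1$.

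For the \emph{upper bound} I argue by induction on the number $\ell$ of parts of $G$ of size at least $2$ (so $\ell=r-s+1$ in \Cref{r_setting}). If $\ell=0$ then $G$ is a complete graph and $\reg(S/\calJ_{K_m,G})=\min\{m-1,n-1\}=m-1$ by \Cref{reg-m-n}. If $\ell\ge1$, let $G'$ be obtained from $G$ by breaking the largest part $V_r$ into $n_r$ singleton parts, i.e.\ $G'=K_{n_1,\dots,n_{r-1},1,\dots,1}$ (with $n_r$ ones) on the vertex set $[n]$, so $\ell(G')=\ell-1$. Arguing exactly as in \Cref{r_dimension} via Rauh's description of the minimal primes (\Cref{lem:decompo}) and the join-graph results of \cite{MR3395714}, one gets $\calJ_{K_m,G'}=\calJ_{K_m,\widetilde G}\cap A_s\cap\cdots\cap A_{r-1}$, and a short generator-by-generator check (each $2$-minor $[i,j\,|\,a,b]$ lies in $A_r$ unless $\{a,b\}\subseteq V_r$, in which case it lies in $A_s\cap\cdots\cap A_{r-1}$) shows $(\calJ_{K_m,\widetilde G}\cap A_s\cap\cdots\cap A_{r-1})+A_r=\calJ_{K_m,\widetilde G}+A_r$. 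Since $\calJ_{K_m,G}=\calJ_{K_m,G'}\cap A_r$, this yields
\[
0\to S/\calJ_{K_m,G}\to S/\calJ_{K_m,G'}\oplus S/A_r\to S/(\calJ_{K_m,\widetilde G}+A_r)\to 0 .
\]
Here $\reg(S/A_r)=0$, and $S/(\calJ_{K_m,\widetilde G}+A_r)\cong\KK[x_{ij}:i\in[m],j\in V_r]/\calJ_{K_m,K_{n_r}}$ has regularity $\min\{m-1,n_r-1\}$ by \Cref{reg-m-n}; by the induction hypothesis (and the $\ell=0$ case when $\ell=1$), $\reg(S/\calJ_{K_m,G'})$ equals $m-1$ if $m>n_{r-1}$ and $m$ if $m\le n_{r-1}$, where $n_{r-1}$ is read as $1$ when $\ell=1$. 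Feeding these into \Cref{depthlemma}\ref{depthlemma-c} and using $n_{r-1}\le n_r$ gives $\reg(S/\calJ_{K_m,G})\le\max\{\reg(S/\calJ_{K_m,G'}),\min\{m,n_r\}\}$, which is $\le m-1$ when $m>n_r$ (then $\reg(S/\calJ_{K_m,G'})=m-1$ and $\min\{m,n_r\}=n_r\le m-1$) and $\le m$ when $n_r\ge m$. Combined with the lower bound, this proves the theorem.

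The main difficulty is choosing the \emph{right} splitting. The naive short exact sequence whose cokernel is $S/\calJ_{K_m,\widetilde G}$ (of regularity $m-1$) only yields $\reg(S/\calJ_{K_m,G})\le m$, which is off by one precisely in the regime $m>n_r$; one must instead peel off a single large part at a time and recognize the cokernel as $S/(\calJ_{K_m,\widetilde G}+A_r)$, whose regularity $\min\{m-1,n_r-1\}$ is strictly smaller there. The technical heart is therefore the two ideal-theoretic identities $\calJ_{K_m,G'}=\calJ_{K_m,\widetilde G}\cap A_s\cap\cdots\cap A_{r-1}$ and $\fraka_{r-1}+A_r=\calJ_{K_m,\widetilde G}+A_r$. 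In the regime $n_r\ge m$ the short exact sequence alone still only gives $\le m$, so the reverse inequality genuinely uses the external input \Cref{star} applied to the induced star $K_{1,m}$.
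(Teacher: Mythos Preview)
Your proof is correct and follows essentially the same route as the paper: both peel off the prime components $A_k$ of \Cref{r_dimension} one at a time via short exact sequences whose cokernel is $S/(\calJ_{K_m,\widetilde G|_{V_k}}+A_k)$, invoking \Cref{reg-m-n} for the cokernels and \Cref{star} for the lower bound when $n_r\ge m$. The only noteworthy differences are cosmetic: the paper peels in the opposite order (intersecting with $A_r$ first, then $A_{r-1},\dots,A_s$), and in the regime $n>m>n_r$ the paper obtains the lower bound $\reg\ge m-1$ directly from the equality clause of \Cref{depthlemma}\ref{depthlemma-c} (since $\reg(N)=m-1\ne n_k-1=\reg(P)$ there), so your separate induced-subgraph argument for that case, while correct, is not needed.
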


\begin{proof}
    By \Cref{r_dimension}, we have
    \[
        \calJ_{K_m,G}= A_s \cap \cdots \cap A_r \cap \calJ_{K_m,\widetilde{G}}.
    \]
    Let $\frakb_k\coloneqq A_k \cap \cdots \cap A_r \cap \calJ_{K_m,\widetilde{G}}$ for any $k\in [s,r]$ and $\frakb_{r+1} \coloneqq \calJ_{K_m,\widetilde{G}}$. Then, $\frakb_k=\frakb_{k+1}\cap A_k$ and $\frakb_{k+1}+A_k=\calJ_{K_m,\widetilde{G}|_{V_k}}+A_k$ for all $k\in [s,r]$. 

    We have three cases.
    \begin{enumerate}[a]
        \item If $m\ge n$, then $\reg(S/\mathcal{J}_{K_m,{G}})=n-1$, by \Cref{lem:reg-min-equal}.

        \item Suppose that $n>m> n_r$.  In this case, we can see that for any $k\in [s,r]$, 
            \begin{gather*}
                \reg(S/\frakb_{r+1})=\reg(S/\calJ_{K_m,\widetilde{G}})=\min\{m-1,n-1\}=m-1, \\
                \reg(S/(\frakb_{k+1}+A_k))=\reg(S/(\calJ_{K_m,\widetilde{G}|_{V_k}}+A_k))=\min\{m-1,n_{k}-1\}=n_{k}-1,
                \intertext{and}
                \reg(S/A_k)=0,
            \end{gather*}
            as per \Cref{reg-m-n}. Consider the following  short exact sequence
            \begin{equation}
                0  \to S/\frakb_k  \to  S/\frakb_{k+1} \oplus S/A_k \to S/(\frakb_{k+1}+A_k)  \to 0
                \label{eqn:b_ideals}
            \end{equation}
            for such $k$. If we argue by induction on $k$ from $r+1$ down to $s$, it follows from \Cref{depthlemma}\ref{depthlemma-c} that $\reg(S/\frakb_k)=m-1$. In particular, $\reg(S/\mathcal{J}_{K_m,{G}})=\reg(S/\frakb_s)=m-1$.

        \item Suppose that $n_r \ge m\ge 2$. Likewise, we have 
            \begin{gather*}
                \reg(S/\frakb_{r+1})=\reg(S/\calJ_{K_m,\widetilde{G}})=\min\{m-1,n-1\}= m-1, \\
                \reg(S/(\frakb_{k+1}+A_k))=\reg(S/(\calJ_{K_m,\widetilde{G}|_{V_k}}+A_k))=\min\{m-1,n_{k}-1\}\le m-1,
                \intertext{and}
                \reg(S/A_k)=0
            \end{gather*}
            for each $k\in [s,r]$. Using a similar technique, we can show that $\reg(S/\frakb_k)\le m$ for $k=r+1,r,\dots,s$. In particular, $\reg(S/\mathcal{J}_{K_m,{G}})=\reg(S/\frakb_s)\le m$. 

            On the other hand, the complete $r$-partite graph $G$ contains an induced subgraph $K_{1,m}$, as $n_r\ge m$ and $r\ge 2$. Therefore, $\reg(S/\mathcal{J}_{K_m,{G}})\ge \reg(S/\calJ_{K_m,K_{1,m}})=m$ by \Cref{lem:induced-graph,star}. Thus, $\reg(S/\mathcal{J}_{K_m,{G}})=m$, which completes the proof. \qedhere
    \end{enumerate}
\end{proof}

\begin{Proposition}
    \label{prop:hilb}
    Under \Cref{r_setting}, the Hilbert series of $S/\mathcal{J}_{K_m,{G}}$ is given by 
    \begin{align*}
        \Hilb(S/\calJ_{K_m,G})= \frac{\sum_{i\ge 0}{m-1\choose i}{n-1\choose i}t^i}{(1-t)^{m+n-1}}+\sum_{k=s}^{r}\left(\frac{1}{(1-t)^{mn_k}} -\frac{\sum_{i\ge 0}{m-1\choose i}{n_k-1\choose i}t^i}{(1-t)^{m+n_k-1}}\right).
    \end{align*}     
\end{Proposition}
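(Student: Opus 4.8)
The plan is to compute the Hilbert series directly from the minimal primary decomposition $\calJ_{K_m,G}=\calJ_{K_m,\widetilde G}\cap A_s\cap\cdots\cap A_r$ established in \Cref{r_dimension}, by iterating the inclusion–exclusion short exact sequences already used in the proofs of \Cref{thm:depth_r_partite} and \Cref{thm:reg_r_partite}. Since the Hilbert series is additive on short exact sequences of graded modules, for each $k\in[s,r]$ the sequence
\[
0\to S/\frakb_k\to S/\frakb_{k+1}\oplus S/A_k\to S/(\frakb_{k+1}+A_k)\to 0
\]
(with $\frakb_k=A_k\cap\cdots\cap A_r\cap\calJ_{K_m,\widetilde G}$, $\frakb_{r+1}=\calJ_{K_m,\widetilde G}$) gives
\[
\Hilb(S/\frakb_k)=\Hilb(S/\frakb_{k+1})+\Hilb(S/A_k)-\Hilb(S/(\frakb_{k+1}+A_k)).
\]
The key observation, already recorded in the proof of \Cref{thm:reg_r_partite}, is that $\frakb_{k+1}+A_k=\calJ_{K_m,\widetilde G|_{V_k}}+A_k$; since $A_k=(x_{ij}:i\in[m],\,j\in T_k)$ is generated by a set of variables disjoint from those appearing in $\calJ_{K_m,\widetilde G|_{V_k}}$, the quotient $S/(\frakb_{k+1}+A_k)$ is (a polynomial extension of) $\KK[x_{ij}:i\in[m],j\in V_k]/\calJ_{K_m,K_{n_k}}$, whose Hilbert series is that of $S/\calJ_{K_m,\widetilde G}$ with $n$ replaced by $n_k$ and the denominator adjusted. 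Summing the telescoping recursion from $k=r$ down to $k=s$ yields
\[
\Hilb(S/\calJ_{K_m,G})=\Hilb(S/\calJ_{K_m,\widetilde G})+\sum_{k=s}^r\Bigl(\Hilb(S/A_k)-\Hilb(S/(\calJ_{K_m,K_{n_k}}+A_k))\Bigr),
\]
which is exactly the claimed formula once the three ingredient series are identified.

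So the real content reduces to one computation: the Hilbert series of $S/\calJ_{K_m,K_n}$, i.e.\ of the generalized binomial edge ideal of a complete graph. I would obtain this from the fact that $\calJ_{K_m,K_n}$ is, up to a change of variables, the ideal $I_2(\bdX)$ of $2\times 2$ minors of the generic $m\times n$ matrix — the defining $2$-minors $p_{(\{i,j\},\{k,l\})}=x_{ik}x_{jl}-x_{il}x_{jk}$ for all $1\le i<j\le m$, $1\le k<l\le n$ are precisely the generators of $I_2(\bdX)$. The Hilbert series of the generic determinantal ring $\KK[\bdX]/I_2(\bdX)$ is classical: its $h$-polynomial is $\sum_{i\ge0}\binom{m-1}{i}\binom{n-1}{i}t^i$ and its Krull dimension is $m+n-1$ (consistent with \Cref{lem:generic-Cohen--Macaulay} with $t=2$, which gives $\depth=\dim=m+n-1$, so the ring is Cohen–Macaulay of that dimension). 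Hence $\Hilb(S/\calJ_{K_m,K_n})=\sum_{i\ge0}\binom{m-1}{i}\binom{n-1}{i}t^i/(1-t)^{m+n-1}$, where the extra variables not involved in $I_2(\bdX)$ — there are none here since $\bdX$ uses all $mn$ variables — are accounted for automatically. For the term $S/(\calJ_{K_m,K_{n_k}}+A_k)$, the variables in $A_k$ are killed, leaving the generic $2$-minor ideal on the $m\times n_k$ submatrix on columns $V_k$; its series is $\sum_{i\ge0}\binom{m-1}{i}\binom{n_k-1}{i}t^i/(1-t)^{m+n_k-1}$. And $S/A_k$ is a polynomial ring in $mn_k$ variables, with series $1/(1-t)^{mn_k}$.

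Assembling: substituting these three series into the telescoped identity gives exactly
\[
\Hilb(S/\calJ_{K_m,G})=\frac{\sum_{i\ge0}\binom{m-1}{i}\binom{n-1}{i}t^i}{(1-t)^{m+n-1}}+\sum_{k=s}^r\left(\frac{1}{(1-t)^{mn_k}}-\frac{\sum_{i\ge0}\binom{m-1}{i}\binom{n_k-1}{i}t^i}{(1-t)^{m+n_k-1}}\right),
\]
as claimed. I would note that the induction requires checking $\frakb_{k+1}+A_k=\calJ_{K_m,\widetilde G|_{V_k}}+A_k$ carefully — this is the step where one uses that the minimal primes are indexed by the $T_i$'s and that intersecting with the remaining $A_j$'s and then adding back $A_k$ collapses everything outside columns $V_k$; it is essentially the same bookkeeping as the chain of inclusions $\frakb_{s-1}=\frakb_{k-1}+\frakb_{s-1}\cap A_k$ appearing in the proof of \Cref{thm:depth_r_partite}, so I would cite that argument rather than repeat it. The main (only) genuine obstacle is invoking the classical Hilbert series of the $2\times 2$ generic determinantal ring; everything else is formal additivity plus the structural facts already in hand. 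I would cite a standard reference (e.g.\ Bruns–Vetter or Bruns–Herzog) for that formula, or alternatively derive it quickly from the Eagon–Northcott resolution, whose graded Betti numbers give the numerator $\sum_i\binom{m-1}{i}\binom{n-1}{i}t^i$ after the usual alternating-sum cancellation.
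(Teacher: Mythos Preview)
Your proposal is correct and follows essentially the same route as the paper: the paper likewise telescopes the short exact sequences \eqref{eqn:b_ideals} from the proof of \Cref{thm:reg_r_partite}, uses the identity $\frakb_{k+1}+A_k=\calJ_{K_m,\widetilde{G}|_{V_k}}+A_k$ established there, and plugs in the classical determinantal Hilbert series (the paper cites \cite[Corollary 1]{MR1213858} for this). One small caveat: your parenthetical remark that the numerator can be read off ``quickly from the Eagon--Northcott resolution'' is only directly applicable when $\min\{m,n\}=2$, since Eagon--Northcott resolves ideals of \emph{maximal} minors; for general $I_2(\bdX)$ you really do need to cite a reference, as both you and the paper do.
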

\begin{proof}
    We adopt the notation from the proof of \Cref{thm:reg_r_partite}. It follows from the short exact sequence in \eqref{eqn:b_ideals} that
    \[
        \Hilb(S/\frakb_k,t)= \Hilb(S/\frakb_{k+1},t)+ \Hilb(S/A_k,t) -\Hilb(S/(\calJ_{K_m,\widetilde{G}|_{V_k}}+A_k),t).
    \]
    Since $\calJ_{K_m,G}=\frakb_{s}$ and $\calJ_{K_m,\widetilde{G}}=\frakb_{r+1}$, we obtain that
    \begin{align*}
        \Hilb(S/\calJ_{K_m,G})= \Hilb(S/\calJ_{K_m,\widetilde{G}})+\sum_{k=s}^{r}\left(\Hilb(S/A_k,t) -\Hilb(S/(\calJ_{K_m,\widetilde{G}|_{V_k}}+A_k),t)\right).
    \end{align*}
    It follows from \cite[Corollary 1]{MR1213858} that
    \begin{align*}
        \Hilb(S/\mathcal{J}_{K_m,\widetilde{G}},t)&=\frac{\sum_{i\ge 0}{m-1\choose i}{n-1\choose i}t^i}{(1-t)^{m+n-1}} \\
        \intertext{and}
        \Hilb(S/(\mathcal{J}_{K_m,\widetilde{G}|_{V_k}}+A_k),t)&=\frac{\sum_{i\ge 0}{m-1\choose i}{n_k-1\choose i}t^i}{(1-t)^{m+n_k-1}}.
    \end{align*}
    Meanwhile, the Hilbert series of the polynomial rings $S/A_k$ is $\frac{1}{(1-t)^{mn_k}}$. Thus, the desired formula follows. 
\end{proof}

\begin{Corollary}
    Under \Cref{r_setting}, we assume additionally that $r=2$. Then
    the multiplicity of $S/\mathcal{J}_{K_m,{G}}$ is given by
    \[
        \e(S/\mathcal{J}_{K_m,{G}})=
        \begin{cases} 
            1, &\text{if $\max\{m+n_1+n_2-1,mn_1\}<mn_2$}, \\ 
            2, &\text{if $m+n_1+n_2-1<mn_1=mn_2$}, \\ 
            2n_2, &\text{if $mn_2<m+n_1+n_2-1$},  \\ 
            12, &\text{if $m+n_1+n_2-1=mn_1=mn_2$}, \\ 
            \sum_{k\ge 0}{m-1\choose k}{n_1+n_2-1\choose k}+1, &\text{if $mn_1< m+n_1+n_2-1=mn_2$}.  
        \end{cases}
    \]
\end{Corollary}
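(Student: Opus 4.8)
The plan is to extract the multiplicity from the Hilbert series computed in \Cref{prop:hilb}, specialized to the case $r=2$. Recall that if $S/I$ has Krull dimension $d$ and Hilbert series $\Hilb(S/I) = Q(t)/(1-t)^{d}$ with $Q(1)\ne 0$, then $\e(S/I)=Q(1)$; more precisely, if $\Hilb(S/I)$ is written as a sum of rational functions with denominators $(1-t)^{e_j}$, only those terms with $e_j = d = \dim(S/I)$ contribute to the multiplicity, and each contributes its numerator evaluated at $t=1$. So the first step is to recall from \Cref{r_dimension} (with $r=2$, so $s$ is either $1$ or $2$) that $d = \dim(S/\calJ_{K_m,G}) = \max\{m+n-1, mn_r\}$ where $n = n_1+n_2$ and $n_r = n_2$. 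The three denominators appearing in the formula of \Cref{prop:hilb} are $(1-t)^{m+n-1}$ (from the $\widetilde G$ term), $(1-t)^{mn_k}$ for $k=s,\dots,r$ (from the $A_k$ terms), and $(1-t)^{m+n_k-1}$ for $k=s,\dots,r$ (from the intersection terms).

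**Next I would** organize the casework exactly as in the statement according to which of the exponents $m+n-1$, $mn_1$ (present only when $s=1$, i.e. $n_1=1$; note when $s=2$ the only $A_k$ term is $A_2$), and $mn_2$ equals the maximum $d$. Here one must be careful: when $n_1\ge 2$ we have $s=1$ and both $A_1$ (exponent $mn_1$) and $A_2$ (exponent $mn_2$) appear, whereas when $n_1=1$ we have $s=2$ and only $A_2$ appears, but then $mn_1 = m < m+n-1$ automatically, so that term never reaches the maximum anyway — this reconciles with the statement, which only lists $mn_1$ as a competitor in cases where it can tie $mn_2$. For each of the five listed cases, I collect the terms whose denominator exponent equals $d$ and sum their numerators at $t=1$: a term $1/(1-t)^{mn_k}$ contributes $1$; a term $\bigl(\sum_i \binom{m-1}{i}\binom{n-1}{i}t^i\bigr)/(1-t)^{m+n-1}$ contributes $\sum_i \binom{m-1}{i}\binom{n-1}{i}$ (and note $\sum_i\binom{m-1}{i}\binom{n-1}{i} = \binom{m+n-2}{m-1}$ by Vandermonde, so in particular it equals $1$ precisely when $m=1$ or $n_1=n_2=0$... hence for $m\ge 2,n\ge 3$ it is at least $2$); a term $-\bigl(\sum_i\binom{m-1}{i}\binom{n_k-1}{i}t^i\bigr)/(1-t)^{m+n_k-1}$ contributes $-\sum_i\binom{m-1}{i}\binom{n_k-1}{i}$ when $m+n_k-1 = d$.

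**Then** I check each case. (1) $\max\{m+n-1,mn_1\}<mn_2$: only the $A_2$ term hits $d=mn_2$, giving $\e=1$. (2) $m+n-1<mn_1=mn_2$, i.e. $n_1=n_2$ with $s=1$: both $A_1$ and $A_2$ terms hit $d$, giving $\e=1+1=2$. (3) $mn_2<m+n-1$: then $d=m+n-1$ and only the $\widetilde G$ term hits it — but wait, one also needs $m+n_2-1\le m+n-1$ always, with equality iff $n_1=0$, impossible, so the intersection terms do not reach $d$; and $mn_k<mn_2<d$; hence $\e = \binom{m+n-2}{m-1}$. The statement writes this as $2n_2$, which forces a sub-constraint — here one must observe that $mn_2 < m+n-1 = m+n_1+n_2-1$ together with $n_r\ge 2$ and the ordering forces $m=2$ (since $mn_2 < m+n_1+n_2 -1$ with $n_1\le n_2$ gives roughly $m < 2 + (n_1-1)/n_2 \le 3$, and $m\ge 2$), whence $\binom{m+n-2}{m-1}=\binom{n}{1}=n=n_1+n_2$; and again $m=2$ plus $n_1\le n_2$ plus the strict inequality forces $n_1=1$, so $n = n_2+1$... this doesn't immediately give $2n_2$, so I must look more carefully — possibly the intended reading uses $\binom{m-1}{0}\binom{n-1}{0}+\binom{m-1}{1}\binom{n-1}{1} = 1 + (m-1)(n-1)$, which at $m=2$ is $1+(n-1)=n$; hmm, so I'd need $n=2n_2$, forcing $n_1=n_2$, contradicting $n_1=1,n_2\ge 2$. \textbf{This discrepancy is the main obstacle:} I will need to recheck the precise constraints defining case (3) — likely $mn_2 < m+n-1$ in the presence of the Setting's hypotheses ($n_r\ge 2$, $n_1\le\dots\le n_r$) is far more restrictive than it looks, and pinning down exactly which $(m,n_1,n_2)$ survive is the delicate part; it may be that case (3) forces $n_1 = n_2$ after all, making $n = 2n_2$ and $\binom{m+n-2}{m-1}$ with the surviving $m$ give $2n_2$. (4) $m+n-1 = mn_1 = mn_2$: all three of the $\widetilde G$, $A_1$, $A_2$ terms hit $d$, and additionally I must check whether $m+n_k-1$ can equal $d$ — it cannot since $m+n_k-1 < m+n-1$ — so $\e = \binom{m+n-2}{m-1} + 1 + 1$; solving $m(n_1+n_2) = m+n_1+n_2-1$ with $n_1=n_2=:a$ gives $2am = m+2a-1$, i.e. $(m-1)(2a-1)=... $ forcing small values, $m=2,a=... $: $4a = 1+2a$ impossible; $m=3$: $6a = 2+2a$, $a=1/2$, no; so this case needs $n_1\ne n_2$? but $mn_1=mn_2$ forces $n_1=n_2$ — so I must recheck, perhaps $m=2$, $n_1=n_2$ is wrong and the resolution gives $\binom{m+n-2}{m-1}=10$, i.e. this is a specific numeric case like $m=2,n_1=n_2=$ something; $\e=12$ then means $\binom{m+n-2}{m-1}=10$, so $m+n-2=5,m-1=2$, i.e. $m=3,n=4$, $n_1=n_2=2$: check $mn_1=6$, $m+n-1=6$, yes! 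So $\binom{5}{2}=10$, $\e=10+1+1=12$. Good — so (4) is the single point $(m,n_1,n_2)=(3,2,2)$. (5) $mn_1 < m+n-1 = mn_2$: then $d = m+n-1 = mn_2$, the $\widetilde G$ term and the $A_2$ term hit $d$; the $A_1$ term ($mn_1$) does not; intersection terms $m+n_k-1 < m+n-1$ do not; so $\e = \binom{m+n-2}{m-1} + 1 = \sum_{k\ge 0}\binom{m-1}{k}\binom{n_1+n_2-1}{k} + 1$, matching the statement. I would present the five cases in this order, with case (3) handled last and most carefully after nailing down its constraints, and I would double-check cases (2)–(4) against the small explicit solutions $(3,2,2)$ etc. to make sure no surviving term is overlooked.
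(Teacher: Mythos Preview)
Your approach is exactly the paper's: read off the multiplicity from the Hilbert series in \Cref{prop:hilb} by keeping only those summands whose denominator exponent equals $d=\dim(S/\calJ_{K_m,G})=\max\{m+n-1,mn_2\}$. The paper's own proof is literally one sentence (``It is a direct consequence of \Cref{prop:hilb}'') together with the two remarks that pin down the constraints in cases (3) and (4). Your treatment of cases (1), (2), (5) is correct, and you identified case (4) as the single point $(m,n_1,n_2)=(3,2,2)$ just as the paper does.

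The one place you stalled is case (3). Your first guess $n_1=1$ is wrong; your later suspicion $n_1=n_2$ is the right one, and it is precisely what the paper records. Here is the clean argument. Under the standing hypotheses $m\ge 2$, $n_1\le n_2$, $n_2\ge 2$, rewrite $mn_2 < m+n_1+n_2-1$ as $(m-1)(n_2-1) < n_1$. Since $n_1\le n_2$, this yields $(m-1)(n_2-1) < n_2$, i.e.\ $(m-2)(n_2-1) < 1$, forcing $m=2$. Substituting back, $n_2-1 < n_1 \le n_2$ forces $n_1=n_2$. Hence $n=2n_2$, so the sole contributing term is the $\widetilde G$ term, whose numerator at $t=1$ is $\sum_i\binom{1}{i}\binom{2n_2-1}{i}=1+(2n_2-1)=2n_2$. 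Finally, your concern about the negative terms with denominator $(1-t)^{m+n_k-1}$ is moot throughout: since $r=2$ gives $n_k<n$, one always has $m+n_k-1 < m+n-1 \le d$, so those terms never reach the top in any of the five cases.
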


\begin{proof}
    It is a direct consequence of \Cref{prop:hilb}. 
    We only remark that if $mn_2<m+n_1+n_2-1$, then $m=2$ and $n_1=n_2$.
    Meanwhile, if $m+n_1+n_2-1=mn_1=mn_2$, then $m=3$ and $n_1=n_2=2$.
\end{proof}

In the following, we summarize the applications of the above results to binomial edge ideals of complete bipartite graphs. Specifically, we recover the corresponding parts of Corollary 3.4 and Theorem 5.4 (a) and (b) from Schenzel and Zafar's research \cite{MR3195706}.

\begin{Corollary}
    Let $G=K_{n_1,n_2}$ be a complete bipartite graph with $n_1\le n_2$. We have the following formulas for its binomial edge ideal $J_G$:
    \begin{enumerate}[a]
        \item  $\depth(S/J_G)=\begin{cases}
                n_2+2, &\text{\ if\ $n_1=1$,} \\
                n_1+2, & \text{\ if\ $n_1\ge 2$.}
            \end{cases}$
        \item The Hilbert series of $S/J_G$ is given by
            \[
                \Hilb(S/J_G,t)=\frac{1+(n_1+n_2-1)t}{(1-t)^{n_1+n_2+1}}+\frac{1}{(1-t)^{2n_1}}+\frac{1}{(1-t)^{2n_2}}
                -\frac{1+(n_1-1)t}{(1-t)^{n_1+1}}-\frac{1+(n_2-1)t}{(1-t)^{n_2+1}}.
            \]
        \item The multiplicity of $S/J_G$ is given by
            \[
                \e(S/J_G)=\begin{cases}
                    1, &\text{if $n_2>n_1+1$},\\ 
                    2n_2, &\text{otherwise.}
                \end{cases}
            \]
    \end{enumerate}
\end{Corollary}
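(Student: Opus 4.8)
The plan is to obtain all three formulas by specializing, to the case $m=2$, the results already established in this section, via the standard identification $J_G=\calJ_{K_2,G}$ of the classical binomial edge ideal with the generalized one attached to $K_2$. If $n_2=1$, then $K_{n_1,n_2}$ is the single edge $K_2$ and $J_G$ is a principal ideal generated by an irreducible quadric, so $S/J_G$ is a hypersurface ring and (a), (b), (c) are verified by inspection; hence I may assume $n_2\ge 2$ and regard $K_{n_1,n_2}$ as the complete $r$-partite graph $G$ of \Cref{r_setting} with $r=2$ and $m=2$. With this choice the integer $s$ of \Cref{r_setting} equals $2$ when $n_1=1$ and equals $1$ when $n_1\ge 2$.

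For part (a) I apply \Cref{thm:depth_r_partite}: $\depth(S/J_G)=m+n_s=2+n_s$, which is $n_2+2$ if $n_1=1$ (then $n_s=n_2$) and $n_1+2$ if $n_1\ge 2$ (then $n_s=n_1$). For part (b) I substitute $m=2$, $n=n_1+n_2$, $r=2$ into the Hilbert-series formula of \Cref{prop:hilb}. All numerators simplify because $\sum_{i\ge 0}\binom{1}{i}\binom{N}{i}t^i=1+Nt$, and the term indexed by $k=1$ in the sum, namely $\frac{1}{(1-t)^{2n_1}}-\frac{1+(n_1-1)t}{(1-t)^{n_1+1}}$, vanishes identically when $n_1=1$; consequently the single displayed formula is correct whether $s=1$ or $s=2$.

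For part (c) I specialize the multiplicity corollary proved above for $r=2$ (itself a consequence of \Cref{prop:hilb}) to $m=2$, so that there $mn_1=2n_1$, $mn_2=2n_2$ and $m+n_1+n_2-1=n_1+n_2+1$. Using the hypothesis $n_1\le n_2$, the five cases collapse: the condition $\max\{n_1+n_2+1,\,2n_1\}<2n_2$ is equivalent to $n_2>n_1+1$ and yields $\e=1$; the condition $2n_2<n_1+n_2+1$ forces $n_1=n_2$, while the condition $2n_1<n_1+n_2+1=2n_2$ forces $n_2=n_1+1$, in which case $\sum_{k\ge 0}\binom{1}{k}\binom{n_1+n_2-1}{k}+1=(n_1+n_2)+1=2n_2$, so both of these give $\e=2n_2$ and together they account for all remaining $n_2\le n_1+1$; the other two cases of that corollary are vacuous for $m=2$. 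This is exactly $\e(S/J_G)=1$ if $n_2>n_1+1$ and $\e(S/J_G)=2n_2$ otherwise.

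There is essentially no serious obstacle: each formula is a direct substitution followed by elementary arithmetic. The only points that need a little attention are the cancellation of the $n_1=1$ summands in the Hilbert series (so that one formula covers both values of $s$) and the bookkeeping that identifies the five-case multiplicity statement with the two cases asserted here.
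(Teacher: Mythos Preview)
Your proposal is correct and follows the same approach the paper intends: the Corollary is stated there without an explicit proof, merely as a specialization of \Cref{thm:depth_r_partite}, \Cref{prop:hilb}, and the multiplicity corollary to the case $m=2$. Your handling of the boundary case $n_2=1$ (which falls outside \Cref{r_setting}) and your observation that the $k=1$ summand in the Hilbert series vanishes when $n_1=1$ are exactly the small checks needed to make the specialization go through cleanly.
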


We conclude this paper by showing that the binomial edge ideal $J_G$ of a complete $r$-partite graph $G$ is of  K\"onig type.
This builds on the work of Herzog et al.~\cite{MR4358668}, who introduced the concept of \emph{graded ideals of K\"onig type} with respect to a monomial order $<$, which generalizes the \emph{edge ideals of K\"onig graphs}. They proved that the Cohen--Macaulay property of the initial ideal $\ini_<(I)$ of $I$ is independent of the characteristic of the base field for any graded ideal $I$  of K\"onig type. 
Williams \cite[Theorem 2.4 and Proposition 2.24]{arXiv:2310.14410} recently demonstrated that the binomial edge ideal of a complete bipartite graph is of K\"onig type. In fact, this result also applies to complete $r$-partite graphs.

\begin{Proposition}
    If $G$ is a complete $r$-partite graph, the binomial edge ideal $J_G$ is of K\"onig type.
\end{Proposition}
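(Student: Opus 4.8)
The plan is to locate inside $J_G$ a complete intersection of the correct length whose leading monomials are pairwise coprime, and to read this off a \emph{spanning linear forest} of $G$. Recall from \cite{MR4358668} that a graded ideal $I\subsetneq S$ of height $c$ is of K\"onig type with respect to a monomial order $<$ if there exist $f_1,\dots,f_c\in I$ such that $\ini_<(f_1),\dots,\ini_<(f_c)$ form a regular sequence; when these initial terms are monomials this simply means they are pairwise coprime. Set $x_i\coloneqq x_{1i}$ and $y_i\coloneqq x_{2i}$, so that $J_G=(x_iy_j-x_jy_i : \{i,j\}\in E(G),\ i<j)$, and fix the lexicographic order $<$ on $S$ with $x_1>\cdots>x_n>y_1>\cdots>y_n$. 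Relabelling the vertices of $G$ permutes the $x_i$ and $y_i$ compatibly (so sends monomial orders to monomial orders and preserves heights), hence ``$J_G$ is of K\"onig type'' does not depend on the chosen labelling of $V(G)$; I will use this freedom.

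The key step is a reduction: \emph{it suffices to produce a spanning subgraph $H\subseteq G$ that is a disjoint union of paths and has exactly $c\coloneqq\Ht(J_G)$ edges.} Given such an $H$, relabel $V(G)=[n]$ so that along each path component of $H$ the vertices carry consecutive integer labels in the order they are visited; then $E(H)=\{\{a,a+1\}:a\in A\}$ for some $A\subseteq[n-1]$ with $|A|=c$. For $a\in A$ put $f_a\coloneqq x_ay_{a+1}-x_{a+1}y_a$; since $\{a,a+1\}\in E(H)\subseteq E(G)$ we have $f_a\in J_H\subseteq J_G$, and $\ini_<(f_a)=x_ay_{a+1}$. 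The monomials $x_ay_{a+1}$, $a\in A$, have pairwise disjoint supports, hence form a regular sequence of length $c=\Ht(J_G)$, which is exactly the K\"onig-type condition for $J_G$.

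It remains to construct $H$. Reorder the parts so $n_1\le\cdots\le n_r$, and put $p\coloneqq n-n_r=\sum_{i<r}n_i$, which is $\ge 1$ since $r\ge 2$. By \Cref{r_dimension} (and the remark following \Cref{r_setting} in the complete-graph case) one has $\dim(S/J_G)=\max\{n+1,2n_r\}$, hence
\[
    c=\Ht(J_G)=2n-\max\{n+1,2n_r\}=\min\{n-1,\,2p\}.
\]
If $2n_r\le n+1$, then $c=n-1$ and every part has at most $\lceil n/2\rceil$ vertices; by the elementary fact that $n$ objects whose ``colour'' classes all have size at most $\lceil n/2\rceil$ can be arranged in a row with no two neighbours of the same colour, the vertices of $G$ can be listed $v_1,\dots,v_n$ with $v_i$ and $v_{i+1}$ always in distinct parts, so that $H$, the Hamiltonian path on $v_1,\dots,v_n$ in this order, is a subgraph of $G$ with $n-1=c$ edges. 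If instead $2n_r\ge n+2$, then $c=2p$ and $n_r\ge p+2$; choose distinct vertices $w_0,w_1,\dots,w_p\in V_r$, enumerate $V(G)\setminus V_r=\{u_1,\dots,u_p\}$, and let $H$ be the path with vertex sequence $w_0,u_1,w_1,u_2,w_2,\dots,u_p,w_p$ together with the remaining $n_r-p-1$ vertices of $V_r$ (at least one, since $n_r\ge p+2$) as isolated vertices. Each edge of that path joins $V_r$ to its complement and so is an edge of $G$; thus $H\subseteq G$ is a spanning disjoint union of paths with $2p=c$ edges. In either case the reduction applies, so $J_G$ is of K\"onig type; specialising to $r=2$ reproves Williams' complete-bipartite case \cite{arXiv:2310.14410}.

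The one point that demands care is making the number of edges of $H$ equal to $\Ht(J_G)$: this is what forces the dichotomy $2n_r\le n+1$ versus $2n_r\ge n+2$ and uses the dimension formula of \Cref{r_dimension}, and in the first case it relies on the classical fact that $K_{n_1,\dots,n_r}$ admits a Hamiltonian path exactly when $2n_r\le n+1$ — the genuine graph-theoretic input, supplied here by the colouring remark. Once the forest $H$ is in hand, the rest is purely formal.
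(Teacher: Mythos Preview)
Your proof is correct and follows essentially the same route as the paper: compute $\Ht(J_G)=\min\{n-1,2(n-n_r)\}$ from \Cref{r_dimension}, split into the two cases $2n_r\le n+1$ versus $2n_r\ge n+2$, and exhibit a path (respectively, a path together with isolated vertices) of the right length. The only presentational differences are that the paper outsources your reduction step to \cite[Theorem~3.5]{MR4358668} (finding a path of length $\Ht(J_G)$ suffices) and, in the Hamiltonian case, writes down an explicit ordering of the vertices rather than invoking the classical criterion $2n_r\le n+1$ for a Hamiltonian path in a complete multipartite graph.
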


\begin{proof}
    If $G$ is a complete graph, it is \emph{traceable}, meaning it has a spanning path as a subgraph. As a result, $J_G$ is of K\"onig type according to \cite[Proposition 3.6]{MR4358668}.

    In the following, we may assume that $G$ is a graph in \Cref{r_setting}. Let $h$ be the height of $J_G$. \Cref{r_dimension} states that $h=2n-\max\{n+1,2n_r\}$. To show that $J_G$ is of K\"onig type, it suffices to find a path of length $h$ in $G$  by \cite[Theorem 3.5]{MR4358668}. Let $n'\coloneqq n_1+n_2+\cdots+n_{r-1}$. There are two cases to consider:
    \begin{enumerate}[a]
        \item Suppose that $n_r>n'$. Notice that $V_1\sqcup \cdots \sqcup V_{r-1}=[n']$. The path we choose is:
            \[
                n'+1,1,n'+2,2,n'+3,3,\dots,n'+n',n',n'+(n'+1).
            \]

        \item Suppose that $n_r\le n'$. We can show that $G$ is {traceable}.
            For notational simplicity, we rewrite the vertices of $V_i$ as $V_i=\{v_{i,1},v_{i,2},\dots, v_{i,n_i}\}$ for $i\in [r]$. Thus, we obtain a total order $\succ$ on $V_1\sqcup \cdots \sqcup V_{r-1}$ by setting $v_{i,j}\succ v_{i',j'}$ if and only if $j<j'$, or $j=j'$ and $i<i'$. With respect to the order $\succ$, we can rewrite $V_1\sqcup \cdots \sqcup V_{r-1}$ as $\{u_1 \succ u_2 \succ \cdots \succ u_{n'}\}$. Furthermore, set $\delta\coloneqq n'-n_r+2$. The path  in $G$ we take will be:
            \[
                n'+1,u_1,u_2,\dots,u_\delta,
                n'+2,u_{\delta+1},n'+3,u_{\delta+2},\dots,n'+n_r-1,u_{\delta+n_r-2}=u_{n'},n'+n_r=n.
            \]
    \end{enumerate}
\end{proof}

\begin{Remark}
    When $G$ is a complete $r$-partite graph, the \emph{generalized} binomial edge ideal $\calJ_{K_m,G}$ is not necessarily of K\"onig type. For instance, the height of the ideal $\calJ_{K_3,K_{2,2}}$ is $6$, while the maximal length of the regular sequence in $\{\ini_{\lex}(p_{(e,f)}):e\in E(K_3),f\in E(K_{2,2})\}$ is $5$.
\end{Remark}

\medskip
\begin{acknowledgment*}
    This work is supported by the Natural Science Foundation of Jiangsu Province (No.~BK20221353). In addition,  the first author is partially supported by the Anhui Initiative in Quantum Information Technologies (No.~AHY150200) and the ``Innovation Program for Quantum Science and Technology'' (2021ZD0302902). And the second author is supported by the Foundation of the Priority Academic Program Development of Jiangsu Higher Education Institutions.  
\end{acknowledgment*}      

\bibliography{BEI} 
\end{document}